\newcommand{\NormOnly}{\| \cdot \|}
\newcommand{\lin}{\mathop{\rm span}\nolimits}
\newcommand{\co}{\operatorname{co}}
\newcommand{\aco}{\operatorname{aco}}
\newcommand{\ws}{weak\mbox{$^*$}}
\newcommand{\R}{\mathbb R}
\newcommand{\N}{\mathbb{N}}
\newcommand{\E}{\mathbb{E}}
\newcommand{\PP}{\mathbb{P}}
\newcommand{\LL}{\mathbb{L}}
\newcommand{\dom}{\operatorname{dom}}
\newtheorem{thm}{Theorem}
\newtheorem{pro}[thm]{Proposition}
\newtheorem{cor}[thm]{Corollary}
\newtheorem{que}[thm]{Question}
\date{}
\title{\bf One-side  James' compactness Theorem }
\author{B. Cascales}
\address{Departamento de Matem\'{a}ticas, Universidad de Murcia,
30100 Espinardo (Murcia), Spain} \email{beca@um.es}
\author{J. Orihuela}
\address{Departamento de Matem\'{a}ticas, Universidad de Murcia,
30100 Espinardo (Murcia), Spain} \email{joseori@um.es}
\author{A. Pérez}
\address{Departamento de Matem\'{a}ticas, Universidad de Murcia,
30100 Espinardo (Murcia), Spain} \email{antonio.perez7@um.es}
\thanks{Partially supported by Ministerio de Economía y Competitividad and FEDER project MTM2011-25377. The third author was supported by a PhD fellowship of ``La Caixa Foundation''.
}
\subjclass[2010]{46A50, 46B50}
\keywords{James' compactness theorem, weakly compact}
\begin{document}

\maketitle
\begin{abstract}
We present some extensions of classical results that involve elements of the dual of Banach spaces, such as Bishop-Phelp's theorem and James' compactness theorem, but restricting to sets of functionals determined by geometrical properties.  The main result, which answers a question posed by F. Delbaen, is the following:  
{\it Let $E$ be a Banach space such that $(B_{E^{\ast}}, \omega^{\ast})$ is convex block compact. Let $A$ and $B$ be bounded, closed and convex sets with distance $d(A,B) > 0$. If every $x^{\ast} \in E^{\ast}$ with
\[ \sup(x^\ast, B) < \inf(x^\ast,A) \]
attains its infimum on $A$ and its supremum on $B$, then $A$ and $B$ are both weakly compact. }

\noindent We obtain new characterizations of weakly compact sets and reflexive spaces, as well as a result concerning a variational problem in dual Banach spaces.
\end{abstract}

\section{Introduction}
\label{intro}

The well-known James' theorem \cite{jam} claims that {\it a bounded closed convex set $C$ in a Banach space $E$ is weakly compact if and only if every $x^{\ast} \in E^{\ast}$ attains its maximum on $C$}. There are many reasons why this theorem has attracted the attention of so many researchers: the great number of applications, the search for simpler proofs (see \cite{god}, \cite{kal1}, \cite{moo}, \cite{morillon}) and the concern of strengthening it (see \cite{quantitative_James}, \cite{Pfitzner_Boundary_Problem}); in this sense, there are results (see \cite{debs_godefroy-saintRaymond},\cite{noteNomrAttaining}) showing that in order to prove that $B_{E}$ is weakly compact (i.e. $E$ reflexive) we do not have to check that every functional of $E^{\ast}$ attains its maximum on $B_{E}$, but only those functionals belonging to a certain set which is ``big enough'' in a topological sense (for instance, a relative \ws-open subset of the unit sphere of $E^{\ast}$).

The aim of this paper is to find sets of functionals which determine the weak compactness of sets in the same spirit of James' compactness theorem, sets determined by geometrical properties. Our motivation is the following question raised by Delbaen: 
\begin{que}
\label{que:Delbaen}
Let $(\Omega,{\mathcal F},\PP)$ be a probability space and let $A$ be a bounded convex and closed subset of $\LL^1(\Omega,{\mathcal F},\PP)$ with $0 \notin A$. Assume that for every $Y\in \LL^\infty(\Omega,{\mathcal F}, \PP)$  with
$$ 0 < \inf \{ \E [X\cdot Y]\colon X\in A \}$$
we have that this infimum is attained. Is $A$ necessarily uniformly integrable?
\end{que}

The main result of this paper, which gives a positive answer to the previous problem, offers a new characterization of weak
compactness in Banach spaces $E$ whose dual unit ball $B_{E^{\ast}}$ endowed with the \ws-topology is convex block compact, i.e., every sequence $(x_{n}^{\ast})_{n \in \N}$ in $B_{E^{\ast}}$ has a convex block subsequence which is \ws-convergent.
\begin{thm}
\label{TEOR:MainOneSide}
Let $E$ be a Banach space such that $(B_{E^{\ast}}, \omega^{\ast})$ is convex block compact. Let $A$ and $B$ be bounded, closed and convex sets with distance $d(A,B) > 0$. If every $x^{\ast} \in E^{\ast}$ with
\[ \sup(x^\ast, B) < \inf(x^\ast,A) \]
attains its infimum on $A$ and its supremum on $B$, then $A$ and $B$ are both weakly compact. 
\end{thm}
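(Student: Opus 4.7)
\emph{Reduction step.} The plan is to reduce the two-set problem to a single-set problem, then prove a one-sided James-type theorem. I would set $D := \overline{A - B}^{\|\cdot\|}$. This is bounded, closed, and convex; since $d(A,B) > 0$, every element of $A-B$ has norm at least $d(A,B)$, hence $0 \notin D$. From the identity
\[
\inf(x^{\ast}, A - B) = \inf(x^{\ast}, A) - \sup(x^{\ast}, B)
\]
and the continuity of $x^{\ast}$, one sees that $x^{\ast}$ belongs to $H := \{x^{\ast} : \sup(x^{\ast}, B) < \inf(x^{\ast}, A)\}$ exactly when $\inf(x^{\ast}, D) > 0$; in that case the hypothesis yields $a_0 \in A$, $b_0 \in B$ realizing $\inf(x^{\ast}, A)$ and $\sup(x^{\ast}, B)$, so $a_0 - b_0 \in A - B \subset D$ attains $\inf(x^{\ast}, D)$. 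The problem thus reduces to the one-set statement: \emph{if $D$ is bounded, closed, convex with $0 \notin D$, and every $x^{\ast}$ with $\inf(x^{\ast}, D) > 0$ attains that infimum on $D$, then $D$ is weakly compact}. Granting this, pick $b_0 \in B$; then $A \subset b_0 + D$, so $A$ is a norm-closed convex, hence weakly closed, subset of the weakly compact $b_0 + D$, and therefore weakly compact. The same argument handles $B$.

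\emph{Core one-sided James.} To prove $D$ weakly compact it suffices, by the classical James theorem, that every $x^{\ast} \in E^{\ast}$ attain its infimum on $D$. The hypothesis directly covers only the open convex cone $K := \{x^{\ast} : \inf(x^{\ast}, D) > 0\}$, which is nonempty by Hahn--Banach. Fix $y_0^{\ast} \in K$; then for any $x^{\ast} \in E^{\ast}$ and all sufficiently large $t > 0$, the perturbation $x^{\ast} + t y_0^{\ast}$ lies in $K$ and hence attains its infimum on $D$ at some $d_t \in D$. The crux is to transfer attainment from the perturbations back to $x^{\ast}$ itself.

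\emph{Where the main obstacle lies.} This transfer is exactly where the convex block compactness of $(B_{E^{\ast}}, \omega^{\ast})$ becomes indispensable. My strategy is to build from the perturbation scheme a bounded sequence $(x_n^{\ast}) \subset K$ approximating $x^{\ast}$ (each attaining its infimum on $D$ at some $d_n \in D$), to apply a Simons-type inequality to $(x_n^{\ast})$, and to exploit convex block compactness to extract an $\omega^{\ast}$-convergent convex block from $(x_n^{\ast})$; the limit functional, paired with the matching block of attainers $d_n$, forces $x^{\ast}$ to attain its infimum on $D$, since otherwise Simons' inequality would be violated. This gives attainment for every $x^{\ast} \in E^{\ast}$, and the classical James theorem delivers the weak compactness of $D$. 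The delicate part is to keep the constructed sequence inside $K$, preserving the applicability of the one-sided hypothesis, while still approximating an arbitrary $x^{\ast} \in E^{\ast}$ closely enough for Simons' inequality to bite---this interplay between the positivity constraint and the approximation quality is the technical heart of the proof.
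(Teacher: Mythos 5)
Your reduction step is correct and is essentially the paper's own: the paper likewise passes to $\hat{B}:=\overline{B-A}^{\|\cdot\|}$ against $\hat{A}=\{0\}$ (your $D=\overline{A-B}^{\|\cdot\|}$ is the same up to a sign), checks exactly as you do that a functional strictly separating $0$ from the difference set attains there at $a_{0}-b_{0}$, and recovers weak compactness of $A$ and $B$ from that of the difference set. That part of your proposal is sound and you may keep it verbatim.

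The core one-sided James step, however, has a genuine gap, and the specific strategy you sketch cannot be completed. You propose to prove that \emph{every} $x^{\ast}\in E^{\ast}$ attains its infimum on $D$ and then invoke the classical James theorem, the attainment to be transferred from the perturbations $x^{\ast}+ty_{0}^{\ast}\in K$ by approximating $x^{\ast}$ with a bounded sequence in $K$ and applying a Simons-type inequality. But an arbitrary $x^{\ast}$ cannot be approximated from within $K$ at all: if $M:=\sup\{\|d\|\colon d\in D\}$ and $\inf(x^{\ast},D)=-c<0$, then every $y^{\ast}\in K$ satisfies $\|x^{\ast}-y^{\ast}\|\geq c/M$, so $x^{\ast}$ lies at positive norm-distance from the cone $K$ and the perturbations $x^{\ast}+ty_{0}^{\ast}$ (which stay in $K$ only for $t$ above a strictly positive threshold) never converge to $x^{\ast}$. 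Moreover, even on $\overline{K}$, a pointwise transfer of attainment is circular: to make the attainers $d_{t}\in D$ of the perturbed functionals cluster at an attainer for $x^{\ast}$ you would need precisely the weak compactness of $D$ that you are trying to prove; attainment on a dense or large family of functionals does not propagate functional-by-functional (Bishop--Phelps already gives norm-density of attaining functionals on every bounded closed convex set, weakly compact or not). So the sentence ``the limit functional, paired with the matching block of attainers $d_{n}$, forces $x^{\ast}$ to attain its infimum'' is the entire theorem, not a step of its proof.

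The paper avoids this obstruction by never proving attainment for all functionals. Instead it aggregates the one-sided hypothesis into a global statement in the bidual: with $A=\{0\}$ it sets $D:=\overline{B}^{\sigma(E^{\ast\ast},E^{\ast})}\subset E^{\ast\ast}$ and proves directly that $\overline{B}^{\sigma(E^{\ast\ast},E^{\ast})}=B$, i.e.\ that $B$ is $\omega^{\ast}$-closed in $E^{\ast\ast}$, which is equivalent to weak compactness. The engine is Theorem \ref{TEOR:GodefroyTypeTheorem2}: assuming the $\omega^{\ast}$- and norm-closed convex hulls differ, one separates, builds interlaced sequences $(x_{m}^{\ast})$ and $(x_{n})$ via Goldstine's theorem (a James--Godefroy double-sequence construction, with Baire category supplying boundedness), extracts a $\sigma(E^{\ast},B\cup D)$-convergent convex block subsequence, and reaches a contradiction using the one-side (I)-generation theorem \ref{theo:UnboundedIEnvelopes}. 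Your instinct that convex block compactness plus a Simons-type inequality are the key ingredients is right, but note a further point your sketch misses: plain $\omega^{\ast}$ convex block compactness of $B_{E^{\ast}}$ only gives $\sigma(E^{\ast},B)$-convergent blocks, and the paper needs the one-side Rainwater--Simons theorem \ref{TEOR:RainwaterGeneral} to upgrade this to $\sigma(E^{\ast},B\cup D)$-convergence before Theorem \ref{TEOR:GodefroyTypeTheorem2} applies. To repair your proposal you would have to replace the ``attainment for every $x^{\ast}$, then classical James'' scheme by such a closedness-in-the-bidual argument.
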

Here, $d(A,B):= \inf{\{ \| a - b\|: a \in A, b \in B \}}$ is the usual distance between sets.
\begin{figure}[h!]
  \centering
    \includegraphics[width=2.2in]{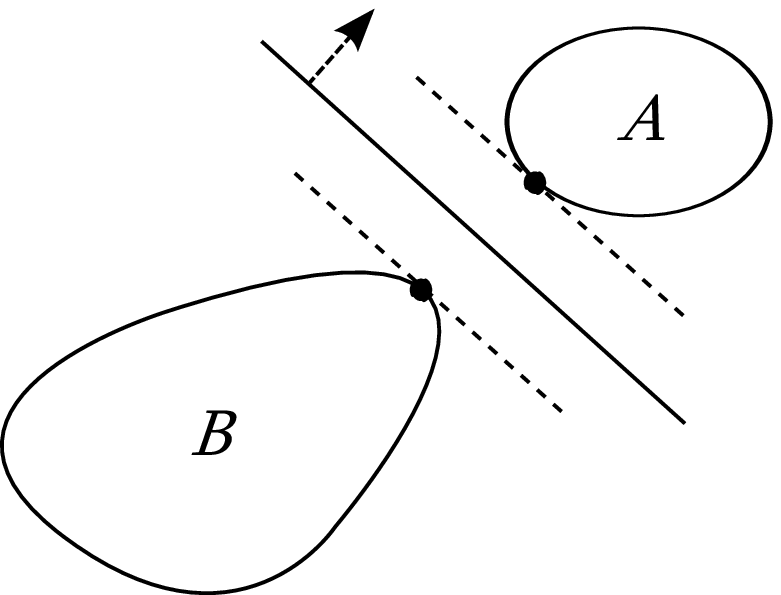}
  \label{fig:OneSideJames}
\end{figure}

As far as we know, Delbaen's problem was motivated by some questions in the framework of financial mathematics. We hope that some other applications might follow from this last theorem.

The geometrical feature of these functionals (see Figure~\ref{fig:OneSideJames}) motivates the denomination ``one-side'' to refer to this new version of James' theorem. This philosophy can be applied to obtain one-side versions of other classical results such as Bishop-Phelps theorem, for instance if we consider only support points corresponding to supporting functionals satisfying a similar geometrical property. This is discussed in section 2. 

In section 3 we revisit the concept of (I)-generation introduced by Fonf and Lindenstrauss \cite[p.159, Definition 2.1]{fon-lin}, and give in Theorem \ref{theo:UnboundedIEnvelopes} a one-side version of their result about the (I)-generation of a set by a James boundary of it. Recall that if $B \subset C$ are subsets of a dual Banach space $E^{\ast}$ then $B$ is said to be a \emph{James boundary} of $C$ if for every $x \in E$ we have that the supremum of $x$ on $C$ is attained at some point of $B$.  In our setting, $B$ has this property only for some functionals $x \in E$, which has the advantage that let us consider even unbounded sets. We show in Proposition \ref{PROP:equivGenerationSimons} the equivalence of the one-side (I)-generation formula given in Theorem \ref{theo:UnboundedIEnvelopes} and Simons' inequality. 

Section 4 is devoted to the proof of Theorem \ref{TEOR:MainOneSide}. Among the ingredients, we will need a one-side version of the classical Rainwater-Simons' theorem \cite[p.139, Theorem 3.134]{fab-haj-mont-ziz}, see Theorem \ref{TEOR:RainwaterGeneral}, . The theory developped here let us offer new characterizations of weakly compact sets and reflexive spaces, see Theorem \ref{TEOR:basic} and Corollary \ref{CORO:reflexiveCharacterization}. 

In Section 5, we prove a one-side version of Godefroy's theorem \cite[p. 174, Theorem I.2]{god} which is valid for unbounded sets, see Theorem \ref{TEOR:god}. Afterwards, we apply this result to the study of the \ws-compactness of the level-sets of convex norm lower semicontinuous maps defined on duals of certain Banach spaces, see Theorem \ref{dualsr}.

\subsection{Notation and terminology}

Most of our notation
and terminology are standard and can be
found in our standard references for Banach
spaces~\cite{fab-haj-mont-ziz}.  

Unless otherwise stated, $E$ will denote a Banach space with the norm $\NormOnly$. Given a subset~$S$
of a vector space, we write $\co{(S)}$, $\aco{(S)}$ and $\lin{(S)}$ to denote, respectively, the convex, absolutely convex and the linear hull of~$S$. If $(E,\NormOnly)$ is a normed space then $E^*$ denotes its topological dual. If $S$ is a subset of $E^*$, then $\sigma(E,S)$ denotes the topology of pointwise
convergence on $S$. Dually, if $S$ is a subset of $E$, then
$\sigma(E^*,S)$ is the topology for $E^*$ of pointwise convergence
on $S$. In particular $\sigma(E,E^*)$ and $\sigma(E^*,E)$ are the
weak ($\omega$)\index{weak topology}\index{topology!weak} and weak$^*$ ($\omega^{\ast}$)\index{weak$^*$ topology} topologies respectively.  

Given $x^{*}\in E^{*}$ and $x\in
E$, we write $\langle x^{*}, x \rangle = \langle x, x^{*} \rangle = x^{*}(x)$ for the
evaluation of~$x^{*}$ at~$x$. If $x\in E$ and $\delta >0$ we denote by $B(x,\delta)$   (or $B[x,\delta]$) the open (resp. closed) ball centered at $x$ of radius $\delta$. To simplify, we will simply write $B_{E}:=B[0,1]$; and the unit sphere \mbox{$\{x\in E\colon \|x\|=1\}$} will be denoted by $S_E$. An element $x^{\ast} \in E^{\ast}$ is \emph{norm-attaining} if there is $x \in B_{E}$ with $x^{\ast}(x) = \| x^{\ast}\|$. The set of norm-attaining functionals of $E$ is normally denoted by $NA(E)$.

Let $E,F$ be (real) vector spaces and $D \subset F$. We will denote the cone generated by $D$ as
\begin{equation} 
\label{EQUA:ConeGeneratedByD}
 \Lambda_{D} := \{ 0\} \cup \{ \lambda f\colon \lambda > 0, f \in D \}.
\end{equation}
If moreover $(E,F)$ is a dual pair (on $\R$) we define  $L_{\emptyset}(E,F) := E \setminus \{ 0\}$ and
\begin{equation} 
\label{EQUA:SetOfDirections}
L_{D}(E,F):= \{ e \in E\colon \mbox{$\sup_{f \in D}{\langle e, f \rangle} < 0$} \}  \hspace{1mm} \mbox{ if $D \neq \emptyset$}.
\end{equation}
When no confusion arises, we will simplify the notatiion and write $L_{D}$ instead of $L_{D}(E,F)$.

If $(x_{n})_{n \in \N}$ is a sequence in a vector space $E$, then then we say that $(y_{n})_{n \in \N}$ is a \emph{normalized block subsequence} of $(x_{n})_{n \in \N}$ if there exist a sequence $(A_{n})_{n \in \N}$ of disjoint finite subsets of $\mathbb{N}$ and a sequence of real numbers $(\lambda_{k})_{k \in \N}$ such that $g_{n} = \sum_{k \in A_{n}}{\lambda_{k} f_{k}}$ and $\sum_{k \in A_{n}}{|\lambda_{k}|} = 1$ for each $n \in \N$. If moreover $\lambda_{k} \geq 0$ for each $k \in \N$ then $(y_{n})_{n \in \N}$ is called a \emph{convex block subsequence}. 

We say that a subset $C$ of a topological vector space $(E, \tau)$ is \emph{block compact} (resp. \emph{convex block compact}) if every sequence of elements of $C$ admits a normalized block subsequence (resp. convex block subsequence) which $\tau$-converges to an element of $C$.

If $f\in \R^X$ ($X$ a non-empty set) we write
\[ \sup{(f,X)}:=\sup{ \{ f(x) \colon x \in X\} } \in (-\infty,+\infty],\]
\[ \inf{(f,X)}:=\inf{ \{ f(x) \colon x \in X\} } \in [-\infty,+\infty).\]
Let $f: E \rightarrow \R \cup \{ + \infty\}$ be a proper, convex and lower-semicontinuous function. Given $x_{0} \in \dom(f)$ and $\varepsilon \geq 0$ we put
\[ \partial_{\varepsilon}{f(x_{0})} := \{ x^{\ast} \in E^{\ast}\colon \langle x^{\ast}, x - x_{0} \rangle \leq f(x) - f(x_{0}) + \varepsilon \mbox{\: for each $x \in E$ } \}.\]
If $\varepsilon > 0$ then $\partial_{\varepsilon}{f(x_{0})}$ is called the $\varepsilon$-subdifferential of $f$ at $x_{0}$, while $\partial{f(x_{0})}:=\partial_{0}{f(x_{0})}$ is the subdifferential of $f$ at $x_{0}$.

\section{One-side Bishop-Phelps' theorem}
\label{sec:1}

Given $C \subset E^{\ast}$, we say that an element $c^{\ast} \in C$ is a \emph{\ws-support point} of $C$ if there exists $x \in E \setminus \{ 0\}$ with $\langle x,c^{\ast} \rangle = \sup{(x,C)}$.  One of the main features of these points is given by the next theorem.
\begin{thm}\cite[p. 177, Theorem 1]{phe3}
\label{TEOR:originalBishopPhelps}
Let $C$ be a \ws-closed convex subset of $E^{\ast}$. The set of \ws-support points of $C$ is norm-dense in the norm-boundary of $C$. 
\end{thm}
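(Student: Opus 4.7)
The plan is to adapt the classical Bishop--Phelps density argument to the dual setting, taking care that all separating functionals come from the predual $E$ (not merely from $E^{\ast\ast}$). Fix $c_0^{\ast}\in\partial C$ in the norm boundary and $\varepsilon>0$; the goal is to produce a \ws-support point of $C$ within norm distance $\varepsilon$ of $c_0^{\ast}$.

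First, I will use the \ws-closedness of $C$ to extract a \emph{strong} separator belonging to the predual. Since $c_0^{\ast}$ sits on the norm boundary, I can pick $z^{\ast}\in E^{\ast}\setminus C$ with $\|z^{\ast}-c_0^{\ast}\|<\eta$, where $\eta>0$ is a small parameter to be adjusted at the end. As $\{z^{\ast}\}$ is \ws-compact and $C$ is \ws-closed convex, Hahn--Banach separation in the dual pair $(E,E^{\ast})$ delivers a unit vector $x\in E$ and a scalar $\alpha>0$ with
\[ \sup(x,C)<x(z^{\ast})-\alpha. \]
This is the crucial place where \ws-closedness of $C$ guarantees that the separator lies in $E$ and not only in $E^{\ast\ast}$.

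Next I run the standard Bishop--Phelps cone/Ekeland mechanism inside the Banach space $(E^{\ast},\NormOnly)$. For a parameter $\theta\in(0,1)$ introduce the cone
\[ K(x,\theta)=\{y^{\ast}\in E^{\ast}\colon \theta\|y^{\ast}\|\le -x(y^{\ast})\}, \]
together with the partial order $u^{\ast}\preceq v^{\ast}$ on $C$ defined by $v^{\ast}-u^{\ast}\in K(x,\theta)$. Since $E^{\ast}$ is complete and $x$ is bounded from above on $C$, any $\preceq$-chain is norm-Cauchy, so $\preceq$ is chain-complete on $C$. Zorn's lemma, applied starting from a point of $C$ close to $z^{\ast}$, yields a $\preceq$-maximal element $c^{\ast}\in C$, which amounts to saying that $(c^{\ast}+K(x,\theta))\cap C=\{c^{\ast}\}$. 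The cone $K(x,\theta)$ has nonempty norm-interior in the direction along which $x$ is strictly negative, so a second Hahn--Banach separation -- once more leveraging the \ws-closedness of $C$ to locate the separating functional inside $E$ -- produces $y\in E\setminus\{0\}$ with $\sup(y,C)=y(c^{\ast})$. Thus $c^{\ast}$ is a \ws-support point of $C$.

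Finally, the cone inequality combined with the separation estimate gives an explicit bound of the form $\|c^{\ast}-c_0^{\ast}\|\le \eta + C(\alpha)\,\theta^{-1}\eta$, so by taking $\eta$ small and $\theta$ close to $1$ we can enforce $\|c^{\ast}-c_0^{\ast}\|<\varepsilon$, completing the argument. The main obstacle I foresee is the second Hahn--Banach step: one needs the functional separating $c^{\ast}+K(x,\theta)$ from $C$ to live in the predual $E$ rather than in $E^{\ast\ast}$. The standard way to achieve this is to argue on \ws-neighborhoods of $C$ inside a sublevel set of $x$ (which is norm-bounded because the gap $\alpha$ is positive) and use the \ws-compactness given by Banach--Alaoglu together with the continuity of $y\mapsto y(c^{\ast})$, rather than invoking the plain Hahn--Banach theorem for the norm topology of $E^{\ast}$.
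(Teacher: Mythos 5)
First, note that the paper itself gives no proof of this statement: it is quoted from Phelps \cite{phe3}, and the closest in-paper argument is the proof of its one-side generalization, Proposition \ref{PROP:OneSideBishopPhelps}, so the comparison below is with that technique. Your proposal has a genuine gap exactly at the step you yourself flag as the ``main obstacle'', and the remedy you sketch does not work. After the Zorn stage you must separate $C$ from $c^{\ast}+K(x,\theta)$ by a functional coming from $E$. Interior-point Hahn--Banach in the Banach space $(E^{\ast},\NormOnly)$ only yields an element of $E^{\ast\ast}$, and two disjoint \ws-closed convex subsets of a dual cannot in general be separated by an element of $E$ unless one of them is \ws-compact; here both $C$ and $c^{\ast}+K(x,\theta)$ are unbounded, since Phelps's theorem carries no boundedness hypothesis on $C$. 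Your fallback --- that the relevant sublevel set of $x$ is ``norm-bounded because the gap $\alpha$ is positive'', so that Banach--Alaoglu applies --- is simply false: take $C=\{y^{\ast}\colon \langle x,y^{\ast}\rangle\le 0\}$, a \ws-closed convex set all of whose $x$-sublevel sets are unbounded, while a positive separation gap is still available; positivity of $\alpha$ bounds nothing. Since producing the supporting functional \emph{in the predual} is precisely what distinguishes the \ws-statement from the classical Bishop--Phelps theorem, what you have written establishes only support by $E^{\ast\ast}$-functionals. There is also an orientation slip, fixable but real: with $K(x,\theta)=\{y^{\ast}\colon\theta\|y^{\ast}\|\le-\langle x,y^{\ast}\rangle\}$ the functional $x$ \emph{decreases} along $\preceq$-chains, and since $x$ is only bounded above on the possibly unbounded set $C$, chains need not be norm-Cauchy; you need the cone $\{y^{\ast}\colon\theta\|y^{\ast}\|\le\langle x,y^{\ast}\rangle\}$, so that boundedness above of $x$ on $C$ yields the Cauchy estimate.

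The missing idea can be supplied as follows, and it is exactly the mechanism of the paper's proof of Proposition \ref{PROP:OneSideBishopPhelps}. Writing $K(x,\theta)\setminus\{0\}=\bigcup_{t>0}tD'$ with $D':=\{u^{\ast}\in B_{E^{\ast}}\colon\langle x,u^{\ast}\rangle\ge\theta\}$ \ws-compact convex and $0\notin D'$, maximality of $c^{\ast}$ gives $(c^{\ast}+\epsilon D')\cap C=\emptyset$ for every $\epsilon>0$, and separating the \ws-compact set $c^{\ast}+\epsilon D'$ from the \ws-closed set $C$ does produce some $x_{1}\in E$ --- but only with the approximate conclusion $\sigma(x_{1})-\langle x_{1},c^{\ast}\rangle<\varepsilon'$ with $\varepsilon'$ of order $\epsilon$, i.e.\ $c^{\ast}\in\partial_{\varepsilon'}\sigma(x_{1})$ for the support function $\sigma(\cdot)=\sup(\cdot\,,C)$, not exact attainment. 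Converting this approximate subgradient into an exact one is where a genuinely new ingredient enters: Br{\o}ndsted--Rockafellar \cite[p. 48, Theorem 3.17]{phe} (or an Ekeland-type perturbation) applied to $\sigma$ on $E$; exact subgradients are what you want, because if $z^{\ast}\in\partial\sigma(z)$ with $z\neq0$ then $z^{\ast}\in C$ (here the \ws-closed convexity of $C$ enters, via $\langle y,z^{\ast}\rangle\le\sigma(y)$ for all $y\in E$) and $\langle z,z^{\ast}\rangle=\sigma(z)$, so $z^{\ast}$ is a \ws-support point. Once this route is taken, your Zorn/cone stage becomes superfluous: from $c_{0}^{\ast}\in\partial C$ and your first separation one reads off directly $c_{0}^{\ast}\in\partial_{\eta}\sigma(x)$ with $\|x\|=1$, and Br{\o}ndsted--Rockafellar yields $z$ near $x$ (hence $z\neq0$) together with $z^{\ast}\in\partial\sigma(z)$ satisfying $\|z^{\ast}-c_{0}^{\ast}\|$ small, which is the desired conclusion.
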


We provide now a one-side extension of the previous theorem. 
\begin{pro}
\label{PROP:OneSideBishopPhelps}
Let $C$ and $D$ be \ws-closed convex subsets of $E^{\ast}$ such that $D$ is bounded and there exists $y \in L_{D}$ with $\sup{(y,C)} < + \infty$. Then, the set
\[ \Sigma_{C}^{D}:= \{ c^{\ast} \in C \colon \langle x, c^{\ast} \rangle = \sup{(x,C)} \mbox{ \: for some }x \in L_{D}  \} \]
satisfies that $C \subseteq \overline{ \Sigma_{C}^{D} + \Lambda_{D} }^{\| \cdot \|}$.
\end{pro}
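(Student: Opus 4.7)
The plan is to combine a Zorn-style maximality argument with Hahn--Banach separation and the Br\o{}ndsted--Rockafellar variational principle. Given $c_0 \in C$ and $\varepsilon > 0$, the goal is to produce $c^{\ast} \in \Sigma_C^D$ and $p \in \Lambda_D$ with $\|c_0 - c^{\ast} - p\| < \varepsilon$. The element $y \in L_D$ with $\sup(y,C) < +\infty$ provided by the hypothesis is used both to force Zorn-chains to be norm-Cauchy and, together with the observations that $0 \notin D$ (otherwise $L_D = \emptyset$) and $D$ is $\omega^{\ast}$-closed, to deduce the preliminary fact that $\Lambda_D$ is norm-closed.

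\emph{Step 1 (Zorn).} Introduce on $C$ the preorder $c_1 \succeq c_2$ iff $c_1 - c_2 \in \Lambda_D$, and apply Zorn's lemma to $A := C \cap (c_0 - \Lambda_D)$ with the restriction of $\succeq$ to produce a $\succeq$-minimal element $c^{\ast}$. Setting $\delta := -\sup(y,D) > 0$ and $R := \sup\{\|f\|\colon f \in D\}$, the estimate $\|c_\beta - c_\alpha\| \leq (R/\delta)\,|\langle y, c_\beta - c_\alpha\rangle|$ (valid whenever $c_\beta - c_\alpha \in \Lambda_D$), combined with the monotonicity and boundedness of $\langle y, \cdot\rangle$ along $\succeq$-chains (since $\sup(y,C) < +\infty$), makes every chain norm-Cauchy, and norm-closedness of $C$ and $\Lambda_D$ places the limit in $A$. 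Writing $p := c_0 - c^{\ast} \in \Lambda_D$, the minimality of $c^{\ast}$ translates, via the fact that $\Lambda_D$ is closed under addition, to $(c^{\ast} - \mu D) \cap C = \emptyset$ for every $\mu > 0$.

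\emph{Step 2 (Hahn--Banach and Br\o{}ndsted--Rockafellar).} Since $c^{\ast} - \mu D$ is $\omega^{\ast}$-compact (Banach--Alaoglu) and $C$ is $\omega^{\ast}$-closed convex, $\omega^{\ast}$-Hahn--Banach produces $x_\mu \in E$ with
\[ \sup(x_\mu, C) < \langle x_\mu, c^{\ast}\rangle - \mu\, \sup(x_\mu, D). \]
Using $c^{\ast} \in C$, this forces $\sup(x_\mu, D) < 0$, so $x_\mu \in L_D$; rescale $x_\mu$ so that $-\sup(x_\mu, D) = 1$, obtaining $c^{\ast} \in \partial_\mu f(x_\mu)$, where $f(x) := \sup(x, C)$ is the proper, convex, lower-semicontinuous support function of $C$. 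The Br\o{}ndsted--Rockafellar theorem then yields $\bar x_\mu \in E$ and $\bar c_\mu \in \partial f(\bar x_\mu) \subseteq C$ with $\|\bar x_\mu - x_\mu\|, \|\bar c_\mu - c^{\ast}\| \leq \sqrt{\mu}$. Subadditivity of the support function of $D$ gives $\sup(\bar x_\mu, D) \leq -1 + R\sqrt{\mu} < 0$ for $\mu < 1/R^2$, so $\bar x_\mu \in L_D$ and hence $\bar c_\mu \in \Sigma_C^D$. Consequently $\|c_0 - \bar c_\mu - p\| = \|c^{\ast} - \bar c_\mu\| \leq \sqrt{\mu}$, which is $<\varepsilon$ for $\mu$ sufficiently small, proving the inclusion.

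\emph{Main obstacle.} The Zorn step is the delicate part: one has to exploit simultaneously the Lipschitz-type inequality coming from $y \in L_D$ and the (non-obvious) norm-closedness of $\Lambda_D$ to ensure chains have bounds in $A$. A secondary subtlety is that the separating functionals $x_\mu$ could a priori have $|\sup(x_\mu, D)|$ arbitrarily small, so the standard normalization $\|x_\mu\| = 1$ would not suffice to guarantee that the Br\o{}ndsted--Rockafellar perturbation lands in $L_D$. The renormalization $-\sup(x_\mu, D) = 1$ instead provides a fixed ``$L_D$-slack'' of size one, which survives perturbations of order $\sqrt{\mu}$ and thus places $\bar x_\mu$ safely inside the norm-open set $L_D$.
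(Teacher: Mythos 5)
Your proof is correct, and its second half is essentially the paper's: both separate the $\omega^{\ast}$-compact convex set $c^{\ast}-\mu D$ from $C$, observe that the separating functional automatically lies in $L_{D}$, convert the strict separation into membership of $c^{\ast}$ in an $\varepsilon$-subdifferential of the support function $\sigma(x)=\sup{(x,C)}$, and invoke Br{\o}ndsted--Rockafellar; your normalization $-\sup{(x_{\mu},D)}=1$ with the $\sqrt{\mu}$-splitting plays exactly the role of the paper's parameter choice $\|z-x_{1}\|\leq -\gamma/(2M)$, $\|z^{\ast}-x_{1}^{\ast}\|\leq 2M\varepsilon$, namely providing a fixed slack that keeps the perturbed direction inside the open set $L_{D}$ (and you rightly note that normalizing $\|x_{\mu}\|=1$ would not suffice). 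The genuine divergence is in producing the decomposition $c_{0}=c^{\ast}+p$, $p\in\Lambda_{D}$, with $(c^{\ast}-\mu D)\cap C=\emptyset$ for all $\mu>0$: the paper gets it by a compactness argument, maximizing the bounded, $\omega^{\ast}$-upper-semicontinuous function $f(d^{\ast})=\sup{\{\lambda\geq 0\colon c_{0}-\lambda d^{\ast}\in C\}}$ over the $\omega^{\ast}$-compact set $D$, where boundedness of $f$ comes from the functional $y$ and maximality together with convexity of $D$ yields the emptiness; you instead run the classical Bishop--Phelps cone/Zorn argument, ordering by the cone $\Lambda_{D}$ and using $y$ for the Lipschitz-type estimate $\|c_{\beta}-c_{\alpha}\|\leq (R/\delta)\,|\langle y, c_{\beta}-c_{\alpha}\rangle|$ that makes chains norm-Cauchy. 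This obliges you to verify two auxiliary facts the paper never needs: that $\Lambda_{D}$ is norm-closed (which holds since $D$ is norm-closed, bounded, and $\inf{\{\|d^{\ast}\|\colon d^{\ast}\in D\}}\geq \delta/\|y\|>0$) and that $\Lambda_{D}$ is additively closed (convexity of $D$); in exchange, your Step 1 avoids the upper-semicontinuity check and uses only norm-completeness and norm-closedness, though no extra generality results overall, since Step 2 still requires $\omega^{\ast}$-compactness of $c^{\ast}-\mu D$ for the strict separation. Two standard points you leave implicit and should record: the relation $c_{1}\succeq c_{2}\Leftrightarrow c_{1}-c_{2}\in\Lambda_{D}$ is in fact a partial order, because $\Lambda_{D}\cap(-\Lambda_{D})=\{0\}$ (again via $y$); and from $\bar c_{\mu}\in\partial{\sigma(\bar x_{\mu})}$ one must still extract, by positive homogeneity of $\sigma$ (testing against $x=\lambda v$ with $\lambda\to\infty$ and against $x=0$), both the membership $\bar c_{\mu}\in C$ and the attainment $\langle \bar x_{\mu}, \bar c_{\mu}\rangle=\sup{(\bar x_{\mu},C)}$ --- the paper writes this computation out.
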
 

\begin{proof}
Fix $x_{0}^{\ast} \in C$. We claim that we can write $x_{0}^{\ast} = x_{1}^{\ast} + \lambda_{1} d_{1}^{\ast}$ for some elements $\lambda_{1} \geq 0$, $d_{1}^{\ast} \in D$ and $x_{1}^{\ast} \in C$ with $(x_{1}^{\ast} - \varepsilon D) \cap C = \emptyset$ for every $\varepsilon > 0$. To see this, consider the function $f: (D, \omega^{\ast}) \rightarrow [0, +\infty)$ given by 
\[ f(d^{\ast}) = \sup{\{ \lambda \geq 0\colon x_{0}^{\ast} - \lambda d^{\ast} \in C \}}.\]
It is a bounded function, since for the element $y \in L_{D}$ of the theorem we have that $\langle y, x_{0}^{\ast} \rangle - \lambda \langle y, d^{\ast} \rangle \leq \sup{(y, C)} < +\infty \mbox{ whenever }x_{0}^{\ast} - \lambda d^{\ast} \in C$, and hence
\[ f(d^{\ast}) \leq \left( \sup{(y, C)} - \langle y, x_{0}^{\ast} \rangle \right)/\left(-\sup{(y,D)}\right) \mbox{ \: for every $d^{\ast} \in D$}. \]
Moreover $f$ is upper-semicontinuous (for the relative \ws-topology on $D$) since if $d^{\ast} \in D$ and $\alpha \in \R$ satisfy that $f(d^{\ast}) < \alpha$, then $x_{0}^{\ast} - \alpha d^{\ast} \notin C$ and using the separation theorem we can find a relatively \ws-open set $W$ in $D$ with $d^{\ast} \in W$ and $\sup{(f, W)} < \alpha$. By \ws-compactness, $f$ attains its supremum $\lambda_{1}$ at some point $d_{1}^{\ast} \in D$. It is clear that $x_{1}^{\ast}:= x_{0}^{\ast} - \lambda_{1} d_{1}^{\ast}$ together with the elements $d_{1}^{\ast}$ and $\lambda_{1}$ satisfy the claim. 

It remains to prove that $x_{1}^{\ast}$ can be approximated (in norm) by elements of $\Sigma_{C}^{D}$, which will finish the proof. Put $M:= \sup{\{ \| d^{\ast}\|\colon d^{\ast} \in D \}}$ and fix $\varepsilon > 0$. Since $(x_{1}^{\ast} - \varepsilon D) \cap C = \emptyset$, there exist $x_{1} \in E$ with
\[ \sup{(x_{1}, C)} < \inf{(x_{1}, x_{1}^{\ast} - \varepsilon D)} = \langle x_{1}, x_{1}^{\ast} \rangle - \varepsilon \sup{(x_{1}, D)}.\]
Then $\gamma := \sup{(x_{1}, D)} < 0$ and
\begin{equation} 
\label{EQUA:auxBP2}
 \sup{(x_{1}, C)} < \langle x_{1}, x_{1}^{\ast} \rangle - \varepsilon \gamma. 
\end{equation}

Consider now the function $\sigma: E \rightarrow \R$ given by $\sigma(x) = \sup{(x,C)}$. It is proper ($y \in \dom(f)$), convex and lower-semicontinuous (it is the supremum of a family of continuous funcionals on $E$). Moreover condition \eqref{EQUA:auxBP2} implies that
\[ \langle x,x_{1}^{\ast} \rangle + (\sigma(x_{1}) - \langle x_{1}, x_{1}^{\ast} \rangle) \leq \sigma(x) + (-\gamma) \varepsilon \]
or equivalently, $x_{1}^{\ast} \in \partial_{- \gamma\varepsilon}{\sigma(x_{1})}$. Using Br{\o}ndsted-Rockafellar's theorem \cite[p. 48, Theorem 3.17]{phe} we can find $z \in \dom(\sigma)$ and $z^{\ast} \in E^{\ast}$ such that
\[ z^{\ast} \in \partial{\sigma(z)}, \: \| z - x_{1}\| \leq -\gamma/(2M) \: \mbox{ and }\: \| z^{\ast} - x_{1}^{\ast}\| \leq 2 M \varepsilon. \]
To finish the proof we will check that $z^{\ast} \in \Sigma_{C}^{D}$. Condition $z^{\ast} \in \partial{\sigma(z)}$ means that
\[ \langle x, z^{\ast} \rangle + \left(\sigma(z) - \langle z, z^{\ast} \rangle \right) \leq \sigma(x) \mbox{ \: for every \: $x \in E$}. \]
Raplacing $x = \lambda y$ for arbitrary $y \in E$ and $\lambda > 0$, and making $\lambda$ tends to infinity, we get that $\langle z^{\ast}, y \rangle \leq \sigma(y)$ for every $y \in E$, so $z^{\ast} \in C$. On the other hand, taking $x=0$ we get that $\sigma(z) = \langle z^{\ast}, z \rangle$, where $z \in L_{D}$ since
\[ \sup(z,D) \leq \sup(x_{1}, D) + \frac{- \gamma}{2 M} M = \frac{\gamma}{2} < 0. \]
\end{proof}

We can particularize the previous proposition to $C=B_{E^{\ast}}$, and taking into account that the \ws-support points of $B_{E^{\ast}}$ are the norm-attaining functionals we deduce the following one-side extension of the classical Bishop-Phelps theorem.
\begin{cor}
The set 
\[ NA(E^{\ast},D) := \{ x^{\ast} \in S_{E^\ast}\colon \langle x,x^{\ast} \rangle = 1 \mbox{ \: for some $x \in L_{D} \cap S_{E}$} \}
\]
satisfies  $B_{E^{\ast}} \subseteq \overline{NA(E^{\ast},D) + \Lambda_{D}}^{\| \cdot \|}$.
\end{cor}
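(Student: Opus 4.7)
The plan is to invoke Proposition~\ref{PROP:OneSideBishopPhelps} with $C := B_{E^{\ast}}$, since $B_{E^{\ast}}$ is a weak-$*$ closed, convex, bounded subset of $E^{\ast}$, and then to identify the resulting set $\Sigma^{D}_{B_{E^{\ast}}}$ with $NA(E^{\ast}, D)$. The only hypothesis of the proposition that requires attention is the existence of some $y \in L_{D}$ with $\sup(y, B_{E^{\ast}}) < +\infty$. Since $\sup(y, B_{E^{\ast}}) = \|y\|$ for every $y \in E$, this condition reduces to $L_{D} \neq \emptyset$, the only non-vacuous case. The proposition then delivers
$$B_{E^{\ast}} \subseteq \overline{\Sigma^{D}_{B_{E^{\ast}}} + \Lambda_{D}}^{\| \cdot \|},$$
so the task is to show $\Sigma^{D}_{B_{E^{\ast}}} = NA(E^{\ast}, D)$.

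To prove this equality, I will use that $L_{D}$ is a cone of nonzero vectors (immediate from the definitions of $L_D$ and $L_\emptyset$), together with the identity $\sup(x, B_{E^{\ast}}) = \|x\|$ for all $x \in E$. If $c^{\ast} \in \Sigma^{D}_{B_{E^{\ast}}}$, then $\langle x, c^{\ast} \rangle = \|x\|$ for some (necessarily nonzero) $x \in L_{D}$; replacing $x$ by $x/\|x\|$ yields $\tilde x \in L_{D} \cap S_{E}$ with $\langle \tilde x, c^{\ast} \rangle = 1$, which forces $\|c^{\ast}\| = 1$ because $c^{\ast} \in B_{E^{\ast}}$, so $c^{\ast} \in NA(E^{\ast}, D)$. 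Conversely, if $x^{\ast} \in NA(E^{\ast}, D)$ then $x^{\ast} \in B_{E^{\ast}}$ and there is $x \in L_{D} \cap S_{E}$ with $\langle x, x^{\ast} \rangle = 1 = \|x\| = \sup(x, B_{E^{\ast}})$, whence $x^{\ast} \in \Sigma^{D}_{B_{E^{\ast}}}$.

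The whole argument amounts to unpacking the definitions, and I do not foresee any real obstacle; the content of the corollary lies entirely in the preceding proposition. As a sanity check, taking $D = \emptyset$ gives $L_{\emptyset} = E \setminus \{0\}$, $\Lambda_{\emptyset} = \{0\}$, and $NA(E^{\ast}, \emptyset)$ equal to the set of unit vectors of $E^{\ast}$ that attain their norm on $S_{E}$, so that the inclusion collapses to the classical Bishop--Phelps statement applied to $B_{E^{\ast}}$.
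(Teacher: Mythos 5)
Your main argument is correct and is precisely the paper's proof: the authors justify the corollary in one line by particularizing Proposition \ref{PROP:OneSideBishopPhelps} to $C = B_{E^{\ast}}$ and identifying the resulting \ws-support points with norm-attaining functionals, which is exactly the identification $\Sigma_{B_{E^{\ast}}}^{D} = NA(E^{\ast},D)$ you carry out, and you are right that the proposition's hypothesis reduces to $L_{D} \neq \emptyset$ since $\sup(y,B_{E^{\ast}}) = \|y\| < +\infty$ for every $y$.

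One caveat: your closing sanity check for $D = \emptyset$ is false. If $D = \emptyset$ then $\Lambda_{\emptyset} = \{0\}$ and $NA(E^{\ast},\emptyset) \subseteq S_{E^{\ast}}$, so $\overline{NA(E^{\ast},\emptyset) + \Lambda_{\emptyset}}^{\|\cdot\|} \subseteq S_{E^{\ast}}$, which cannot contain $B_{E^{\ast}}$ (it misses $0$). The classical Bishop--Phelps statement (Theorem \ref{TEOR:originalBishopPhelps}) gives density of the support points in the norm-\emph{boundary} of $C$, i.e. $S_{E^{\ast}} \subseteq \overline{NA(E^{\ast},\emptyset)}^{\|\cdot\|}$, not an inclusion of the whole ball; interior points of $B_{E^{\ast}}$ are reached in the corollary only thanks to the nontrivial cone $\Lambda_{D}$. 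Accordingly, both Proposition \ref{PROP:OneSideBishopPhelps} and the corollary implicitly require $D \neq \emptyset$: the proposition's proof uses the \ws-compactness of $D$ to make $f$ attain its supremum at some $d_{1}^{\ast} \in D$, and divides by $-\sup(y,D)$, both of which break down for $D = \emptyset$. Compare Theorem \ref{theo:UnboundedIEnvelopes}, where the case $D = \emptyset$ is handled separately via Theorem \ref{TEOR:originalBishopPhelps} together with the extra step $\co{(\partial \Gamma)} = \Gamma$; no such step is available here because the corollary's right-hand side carries no convex hull. Your proof of the corollary itself is unaffected, since you invoke the proposition only under its own (implicitly nonempty-$D$) hypotheses, but the $D = \emptyset$ remark should be deleted.
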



\section{One-side (I)-generation}
\label{sec:2}

Let $K$ be a \ws-compact convex subset of $E^{\ast}$ and $B \subset K$. In \cite[p.159, Definition 2.1]{fon-lin} Fonf and Lindenstrauss introduced the following notion: \emph{we say that $B$ (I)-generates $K$ if for every countable union of \ws-compact convex sets $\bigcup_{n=1}^{\infty}{K_{n}}$ containing $B$ we have that $K$ is contained in the norm-closed convex hull of $\bigcup_{n=1}^{\infty}{K_{n}}$}. It is clear that in this case $K$ coincides with the \ws-closed convex hull of $B$. A sufficient condition for $B$ to (I)-generate $K$ is that $B$ is a James boundary of $K$, (i.e. every $x \in E$ attains its maximum on $K$ at some point of $B$) as it is shown in \cite[p.160, Theorem 2.3]{fon-lin}. This fact was exploited afterwards by Kalenda \cite{kal1} and Moors \cite{moo} to give simple proofs of James' theorem in nonseparable cases. 

The following theorem provides a one-side extension of \cite[p.160, Theorem 2.3]{fon-lin} when $B$ satisfies the definition of James boundary only for a certain set of elements $x \in E$. We point out that, in contrast with the Fonf-Lindenstrauss result, unbounded sets $B$ are allowed here. The proof is inspired on the one of \cite[p. 99, Theorem 2]{moo}, although we avoid to use Krein-Milmann and Milmann theorems.

\begin{thm}\label{theo:UnboundedIEnvelopes}
Let $D \subset E^{\ast}$ be a \ws-compact convex set with $0 \notin D$ and let $B \subset E^{\ast}$ be a set with the following property:
\begin{enumerate}
\item[(i)] For every $x \in L_{D}$ there is $b^{\ast} \in B$ with $\langle x, b^{\ast} \rangle = \sup(x,B)$.
\end{enumerate}
If $B \subset \cup_{n=1}^{\infty}{K_{n}}$ for some family of \ws-compact convex subsets of $\overline{\co{(B)}}^{\omega^{\ast}}$ then
\[ \overline{\co{(B)}}^{\omega^{\ast}} \subset \overline{\co{(\cup_{n=1}^{\infty}{K_{n}})}  + \Lambda_{D}}^{\| \cdot \|}. \]
\end{thm}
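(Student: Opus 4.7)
The plan is to combine the one-side Bishop-Phelps theorem (Proposition~\ref{PROP:OneSideBishopPhelps}) with an adaptation of Moors' proof \cite[p.~99, Theorem~2]{moo} of the Fonf-Lindenstrauss (I)-generation theorem. Set $C := \overline{\co(B)}^{\omega^{\ast}}$ and $M := \overline{\co(\cup_{n} K_{n}) + \Lambda_{D}}^{\|\cdot\|}$. As a preliminary reduction, since $0 \notin D$ and $D$ is $\omega^{\ast}$-compact convex, Hahn-Banach separation in $(E^{\ast},\omega^{\ast})$ provides some $y_{0} \in L_{D}$, and property~(i) then gives $\sup(y_{0}, C) = \sup(y_{0}, B) < +\infty$. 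Thus Proposition~\ref{PROP:OneSideBishopPhelps} applies to the pair $(C,D)$ and yields
\[
 C \;\subseteq\; \overline{\Sigma_{C}^{D} + \Lambda_{D}}^{\|\cdot\|}.
\]
Since $D$ is convex, $\Lambda_{D}$ is a convex cone with $\Lambda_{D}+\Lambda_{D}=\Lambda_{D}$, so $M+\Lambda_{D}=M$ and therefore the theorem reduces to the inclusion $\Sigma_{C}^{D}\subseteq M$.

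To prove $\Sigma_{C}^{D}\subseteq M$, I would fix $c^{\ast}\in \Sigma_{C}^{D}$ together with a witness $x_{\ast}\in L_{D}$ for which $\langle x_{\ast},c^{\ast}\rangle=\sup(x_{\ast},C)$. By property~(i), $\sup(x_{\ast},B)$ is attained at some $b^{\ast}\in B\subseteq \cup_n K_n$, so $b^{\ast}\in K_{n_{0}}$ for some index $n_{0}$, and both $c^{\ast}$ and $b^{\ast}$ belong to the face $F_{x_{\ast}}:=\{c\in C:\langle x_{\ast},c\rangle=\sup(x_{\ast},C)\}$ of $C$. Assuming for contradiction $d(c^{\ast},M)>0$, I would carry out a Moors-type inductive construction: at each step, perturb the current direction by a small multiple of the fixed $y_{0}\in L_{D}$, so that the perturbed direction remains inside the open cone $L_{D}$ and property~(i) keeps applying; use property~(i) to select $b_{k}^{\ast}\in B\cap K_{m_{k}}$, and form convex combinations $z_{k}^{\ast}\in\co(\cup_{n}K_{n})$ corrected by elements $\omega_{k}\in \Lambda_{D}$. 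The sequence $z_{k}^{\ast}+\omega_{k}\in M$ should then converge in norm to $c^{\ast}$, contradicting $d(c^{\ast},M)>0$.

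The principal obstacle is the unboundedness of $B$ and $C$: Moors' original argument relies on the $\omega^{\ast}$-compactness of $\overline{\co(B)}^{\omega^{\ast}}$ to extract $\omega^{\ast}$-convergent subsequences of support points and to apply the classical Bishop-Phelps theorem inside a compact set. That compactness is unavailable here, so the remedy has to come from the extra freedom built into the conclusion: norm-approximation is required only modulo $\Lambda_{D}$, and each iteration of the one-side Bishop-Phelps step already furnishes such a correction. Orchestrating the induction so that these successive $\Lambda_{D}$-corrections combine into a single element of $\Lambda_{D}$ (via the semigroup identity $\Lambda_{D}+\Lambda_{D}=\Lambda_{D}$) while keeping every perturbation of $x_{\ast}$ inside the open cone $L_{D}$ is, in my view, the technical heart of the argument, and is precisely what replaces the Krein-Milman/Milman step mentioned by the authors as avoided in this approach.
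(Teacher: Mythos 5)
Your opening reduction is sound and in fact coincides with the paper's closing step: separation gives $y_{0}\in L_{D}$, property (i) makes $\sup(y_{0},C)=\sup(y_{0},B)$ finite, and since $M:=\overline{\co(\cup_{n}K_{n})+\Lambda_{D}}^{\|\cdot\|}$ satisfies $M+\Lambda_{D}=M$, Proposition~\ref{PROP:OneSideBishopPhelps} reduces the theorem to the inclusion $\Sigma_{C}^{D}\subseteq M$. The genuine gap is that this inclusion is essentially the theorem itself, and your sketch does not prove it. Knowing that $c^{\ast}$ and some $b^{\ast}\in K_{n_{0}}$ lie in the same face $F_{x_{\ast}}$ of $C$ gives no control on $c^{\ast}$: a face of a \ws-closed convex hull may contain points that are not norm-close to $\co(\cup_{n}K_{n})$, and nothing in your outline addresses this. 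Moreover, the proposed ``Moors-type'' iteration has no mechanism for landing in the covering: each application of the one-side Bishop--Phelps machinery (or of Br{\o}ndsted--Rockafellar underneath it) produces \emph{support points of $C$}, not elements of $\co(\cup_{n}K_{n})$, so the approximants $z_{k}^{\ast}+\omega_{k}$ you posit are never shown to exist. The $\Lambda_{D}$-bookkeeping you single out as the technical heart is actually the easy part (the semigroup identity $\Lambda_{D}+\Lambda_{D}=\Lambda_{D}$); the hard part, which is missing, is passing from support points to the countable family $(K_{n})_{n}$.

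The paper's device, absent from your proposal, is an $\varepsilon$-enlargement that creates a quantitative gap permitting face localization. Fix $\varepsilon>0$, set $C_{n}:=K_{n}+(\varepsilon/n)B_{E^{\ast}}$ and $\Gamma:=\overline{\co(\cup_{n}C_{n})}^{\omega^{\ast}}$, and run your Bishop--Phelps reduction on $\Gamma$ instead of on $C$. For $x_{0}\in L_{D}$ one has $\alpha:=\sup(x_{0},\Gamma)-\sup(x_{0},B)>0$: by (i) the supremum on $B$ is attained at some $b_{0}^{\ast}$, and $b_{0}^{\ast}+(\varepsilon/j)B_{E^{\ast}}\subseteq\Gamma$ pushes the supremum on $\Gamma$ strictly higher. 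Choosing $N$ with $\varepsilon/N<\alpha$, the tail sets $C_{n}$ with $n>N$ stay strictly below $\sup(x_{0},\Gamma)$, and writing $\Gamma=\co\bigl(\co(\cup_{n\leq N}C_{n})\cup\overline{\co(\cup_{n>N}C_{n})}^{\omega^{\ast}}\bigr)$ a short convexity argument shows that the entire face $F_{x_{0}}$ of $\Gamma$ lies in $\co(\cup_{n\leq N}C_{n})$. Hence $\Sigma_{\Gamma}^{D}\subseteq\co(\cup_{n}K_{n})+\varepsilon B_{E^{\ast}}$, Proposition~\ref{PROP:OneSideBishopPhelps} applied to $\Gamma$ gives $\overline{\co(B)}^{\omega^{\ast}}\subseteq\Gamma\subseteq\co(\cup_{n}K_{n})+\Lambda_{D}+2\varepsilon B_{E^{\ast}}$, and letting $\varepsilon\to0$ finishes. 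This localization of the face in finitely many enlarged sets is exactly the step your induction would have to supply and does not; without it the argument is circular at its key point.
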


\begin{proof}
Fix $\varepsilon > 0$. For every $n \in \N$ define $C_{n} := K_{n} + (\varepsilon/n) B_{E^{\ast}}$. It is clear that these are \ws-compact convex sets satisfying that
\[ \overline{\co{(B)}}^{\omega^{\ast}} \subseteq \Gamma := \overline{\co{(\cup_{n=1}^{\infty}{C_{n}})}}^{\omega^{\ast}}. \]
\textbf{Claim:} Given $x_{0} \in L_{D}$ there is $N \in \N$ such that 
\[ F_{x_{0}}:= \{ y^{\ast} \in \Gamma\colon \langle  x_{0}, y^{\ast} \rangle = \sup{(x_{0}, \Gamma)}\} \subset \co{\mbox{$(\cup_{n=1}^{N}{C_{n}})$}}.\] 
We can assume that $\|x_{0}\|=1$. Set $\alpha:= \sup{(x_{0}, \Gamma)} - \sup{(x_{0}, B)}$. Notice that $\alpha > 0$, because otherwise both supremums coincide and by the hypothesis there would be $b_{0}^{\ast} \in B$ with $\langle x_{0}, b_{0}^{\ast} \rangle = \sup{(x_{0}, B)} = \sup{(x_{0}, \Gamma)}$. This is not possible since $b_{0}^{\ast} + (\varepsilon/j)B_{E^{\ast}} \subseteq \Gamma$ for some $j \in \N$. Take $N \in \N$ such that $\varepsilon/N < \alpha$. This yields that
\begin{equation}
\label{EQUA:strictSupremum}
\begin{split}
\sup{(x_{0}, \mbox{$\bigcup_{n > N}{C_{n}}$})} & \leq \sup{(x_{0}, \mbox{$\bigcup_{n>N}{K_{n}}$})} + \frac{\varepsilon}{N}\\
 & < \sup{(x_{0}, \overline{\co{(B)}}^{\omega^{\ast}})} + \alpha
= \sup{(x_{0}, \Gamma)}.
\end{split}
\end{equation}
Notice that
\[ \Gamma=\overline{\co{(\mbox{$\bigcup_{n=1}^{\infty}{C_{n}}$})}}^{\omega^{\ast}} = \co{\left( \co{(\mbox{$\bigcup_{n=1}^{N}{C_{n}}$})} \cup \overline{\co{(\mbox{$\bigcup_{n > N}{C_{n}}$})}}^{\omega^{\ast}} \right)}. \]
Therefore, given $y^{\ast} \in F_{x_{0}}$ we can write it as $y^{\ast} = \lambda x^{\ast}_{1} + (1 - \lambda)x^{\ast}_{2}$ for some $\lambda \in [0,1]$, $x_{1}^{\ast} \in \co{(\bigcup_{n=1}^{N}{C_{n}})}$ and $x_{2}^{\ast} \in \overline{\co{(\bigcup_{n > N}{C_{n}})}}^{\omega^{\ast}}$. But \eqref{EQUA:strictSupremum} gives
\[ \langle x_{0}, x_{2}^{\ast} \rangle \leq \sup{(x_{0}, \mbox{$\bigcup_{n > N}{C_{n}}$})} < \sup{(x_{0}, \Gamma)}, \] 
so if $\lambda < 1$ then
\[ \langle x_{0}, y^{\ast} \rangle = \lambda \langle x_{0}, x_{1}^{\ast} \rangle + (1 - \lambda) \langle x_{0}, x_{2}^{\ast} \rangle < \sup{(x_{0}, \Gamma)}. \]
which contradicts that $y^{\ast} \in F_{x_{0}}$. Thus $\lambda = 1$ and we conclude that $y^{\ast} = x_{1}^{\ast}$ belongs to $\co{(\bigcup_{n=1}^{N}{C_{n}})}$. This proves the claim.\\

\noindent We can now finish the proof of the theorem. By the claim we have that
\[ \Sigma_{\Gamma}^{D} = \bigcup{\{F_{x_{0}}\colon x_{0} \in L_{D}\}} \subseteq \co{(\mbox{$\bigcup_{n=1}^{\infty}{C_{n}}$})} \subseteq \co{(\mbox{$\bigcup_{n=1}^{\infty}{K_{n}}$})} + \varepsilon B_{E^{\ast}}. \]
We now distinguish two cases: If $D \neq \emptyset$, then using Proposition \ref{PROP:OneSideBishopPhelps} we deduce that
\[
 \Gamma \subseteq \overline{ \Sigma_{\Gamma}^{D} + \Lambda_{D}}^{\| \cdot \|} \subseteq \co{(\mbox{$\bigcup_{n=1}^{\infty}{K_{n}}$})} + \Lambda_{D} + 2 \varepsilon B_{E^{\ast}}. 
 \]
On the other hand, if $D= \emptyset$ then $L_{D} = E \setminus \{ 0\}$ and $\Lambda_{D} = \{ 0\}$, so $\Sigma_{\Gamma}^{D}$ is the set of all \ws-support points of $\Gamma$. By Theorem \ref{TEOR:originalBishopPhelps} we obtain that its norm-boundary satisfies 
 \[ \partial{\Gamma} = \overline{ \Sigma_{\Gamma}^{D}}^{\| \cdot \|} = \overline{ \Sigma_{\Gamma}^{D} + \Lambda_{D}}^{\| \cdot \|} \subseteq \co{(\mbox{$\bigcup_{n=1}^{\infty}{K_{n}}$})} + \Lambda_{D} + 2 \varepsilon B_{E^{\ast}}  \]
 But $\Gamma$ is bounded by condition (i), so it is clear that $\co{(\partial{\Gamma})} = \Gamma$ and we obtain in both cases that
\[ \Gamma \subseteq \co{(\mbox{$\bigcup_{n=1}^{\infty}{K_{n}}$})} + \Lambda_{D} + 2 \varepsilon B_{E^{\ast}} \]
Since $\overline{\co{(B)}}^{\omega^{\ast}} \subseteq \Gamma$ and $\varepsilon > 0$ was arbitrary the proof is finished.
\end{proof}

In \cite[Theorem 2.2]{cas-alt16} it is shown that the property of (I)-generation is equivalent to Simons' $\sup - \limsup$ theorem (fact that appears implicitly in \cite[Lemma 2.1 and Remark 2.2]{kal1}) and also to Simon's inequality. A similar equivalence can be also established in our one-side case. We put
\[ \co_{\sigma_{p}}{\{ x_{n}\colon n \geq 1 \}} := \left\{ \sum_{n=1}^{+ \infty}{\lambda_{n} x_{n}}\colon \mbox{ for all $n \geq 1$, $\lambda_{n} \geq 0$ and $\sum_{n=1}^{\infty}{\lambda_{n}} = 1$} \right\}. \]
The following Proposition answers \cite[Question 10.9]{Compactness-optimality-and-risk}.
\begin{pro}
\label{PROP:equivGenerationSimons}
Let $D \subset E^{\ast}$ be a \ws-compact convex set with $0 \notin D$ and let $C \subset E^{\ast}$ be a \ws-closed convex set such that every $x \in L_{D}$ has finite supremum on $C$. Given $B \subset C$ the following assertions are equivalent:
\begin{enumerate}
\item[(i)] For every covering $B \subseteq \bigcup_{n =1}^{\infty}{K_{n}}$ by an increasing sequence of \ws-compact convex subsets $K_{n} \subset C$ we have that
\[ C \subset \overline{\mbox{$\bigcup_{n=1}^{\infty}{K_{n}}$} + \Lambda_{D}}^{\| \cdot \|}. \]
\item[(ii)] For every bounded sequence $(x_{n})_{n \in \N}$ in $L_{D}$
\[ \sup_{b^{\ast} \in B}{(\limsup_{n}{\langle b^{\ast},x_{n} \rangle})} = \sup_{c^{\ast} \in C}{(\limsup_{n}{\langle c^{\ast}, x_{n} \rangle})}. \]
\item[(iii)] For every bounded sequence $(x_{n})_{n \in \N}$ in $L_{D}$ 
\[ \sup_{b^{\ast} \in B}{(\limsup_{n}{\langle b^{\ast},x_{n} \rangle})} \geq \inf_{x \in \co_{\sigma_{p}}{\{ x_{n}\colon n \geq 1 \}}}{ \sup_{c^{\ast} \in C}{\langle c^{\ast}, x}\rangle }. \]
\end{enumerate}
\end{pro}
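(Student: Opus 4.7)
My plan is to prove the cycle (i) $\Rightarrow$ (ii) $\Rightarrow$ (iii) $\Rightarrow$ (i).

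For (i) $\Rightarrow$ (ii), I fix a bounded sequence $(x_n)$ in $L_D$, set $\alpha := \sup_{b^\ast \in B} \limsup_n \langle b^\ast, x_n \rangle$, and for each $\varepsilon > 0$ and $m \in \N$ consider
\[
K_m := \{c^\ast \in C \colon \langle c^\ast, x_k \rangle \leq \alpha + \varepsilon \mbox{ for all } k \geq m\} \cap m B_{E^{\ast}}.
\]
These form an increasing family of \ws-compact convex subsets of $C$ covering $B$, so (i) applies. For any $c^\ast \in C$ and any $\delta > 0$ one then finds $k \in K_m$, $\lambda \geq 0$, $d \in D$ with $\|c^\ast - k - \lambda d\| < \delta$; using $\langle d, x_n \rangle < 0$ (because $x_n \in L_D$) and $\lambda \geq 0$, this gives $\langle c^\ast, x_n \rangle \leq \alpha + \varepsilon + \delta \sup_k \|x_k\|$ for every $n \geq m$. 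Taking $\limsup_n$ and then letting $\delta, \varepsilon \to 0$ yields (ii).

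For (ii) $\Rightarrow$ (iii), I would apply Simons' Lemma to the family of functions $c^\ast \mapsto \langle c^\ast, x_n \rangle$ on $X = C$, obtaining
\[
\inf_{y \in \co_{\sigma_{p}}{\{ x_{n}\colon n \geq 1 \}}} \sup_{c^\ast \in C} \langle c^\ast, y \rangle \leq \sup_{c^\ast \in C} \limsup_n \langle c^\ast, x_n \rangle,
\]
and then rewrite the right-hand side using (ii). The functions $\langle \cdot, x_n \rangle$ are bounded above on $C$ by $\sup(x_n, C) < +\infty$ but need not be uniformly bounded below; I handle this by applying Simons' Lemma to the truncations $\max(\langle \cdot, x_n \rangle, -M)$ and letting $M \to \infty$.

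For (iii) $\Rightarrow$ (i), suppose instead that there exist an increasing family $(K_n)$ of \ws-compact convex subsets of $C$ covering $B$ and a point $c^\ast \in C$ with $d(c^\ast, \bigcup_n K_n + \Lambda_D) \geq \delta > 0$. I first check that each $K_n + \Lambda_D$ is \ws-closed: for a \ws-convergent net $(k_\alpha + \lambda_\alpha d_\alpha)$ with $k_\alpha \in K_n$ and $d_\alpha \in D$, any $x \in L_D$ makes $\langle x, d_\alpha \rangle$ bounded away from $0$ while $\langle x, k_\alpha \rangle$ stays bounded, forcing $(\lambda_\alpha)$ to have a convergent subnet. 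The duality formula for the norm distance to a \ws-closed convex set then provides $x_n \in B_E$ with $\sup_D x_n \leq 0$ and $\langle x_n, c^\ast \rangle > \sup(x_n, K_n) + \delta/2$. A translation $x_n' := x_n + \lambda y$ with a fixed $y \in L_D \cap B_E$ and $\lambda > 0$ small places $x_n'$ strictly inside $L_D$ and retains $\langle x_n', c^\ast \rangle > \sup(x_n', K_n) + \delta/3$. Choosing any fixed $b_0^\ast \in B$, which lies in $K_n$ from some index on, shows that $A_n := \sup(x_n', K_n)$ is bounded in $\R$, so a subsequence converges to some $A \in \R$; after further discarding finitely many terms I obtain $\langle c^\ast, x_n' \rangle > A + \delta/4$ along the remaining sequence, whence $\sup_{b^\ast \in B} \limsup_n \langle b^\ast, x_n' \rangle \leq A$ while $\inf_{y \in \co_{\sigma_{p}}{\{x_n'\colon n \geq 1\}}} \sup_{c^\ast \in C} \langle c^\ast, y \rangle \geq A + \delta/4$, contradicting (iii).

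The main obstacle is the last implication: establishing that $K_n + \Lambda_D$ is \ws-closed (which is what lets Hahn--Banach supply a separating vector from $E$ rather than from $E^{\ast\ast}$) and then controlling $A_n$ along a subsequence while maintaining the gap $\langle c^\ast, x_n' \rangle > A + \delta/4$ are the two delicate steps. The boundedness issue for Simons' Lemma in (ii) $\Rightarrow$ (iii) is a comparatively routine secondary technicality.
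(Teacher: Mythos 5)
Your steps (i) $\Rightarrow$ (ii) and (iii) $\Rightarrow$ (i) are correct and essentially coincide with the paper's proof: in (iii) $\Rightarrow$ (i) you replace the paper's weak$^*$-cluster point $y^{\ast\ast}\in E^{\ast\ast}$ of the perturbed sequence by a convergent subsequence of the scalar sequence $A_n=\sup(x_n',K_n)$, a harmless cosmetic variant, and your net argument for the weak$^*$-closedness of $K_n+\Lambda_D$ is the standard one the paper uses implicitly. The genuine gap is in (ii) $\Rightarrow$ (iii). Simons' inequality with $B=X=C$ is \emph{not} hypothesis-free: it requires that every element of $\co_{\sigma_{p}}\{f_n\colon n\geq 1\}$ attain its supremum on $X$, and without attainment it is simply false even for uniformly bounded sequences. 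Take $X=\N$ and $f_n=\mathbf{1}_{\{m\in\N\colon m\geq n\}}$: then $\sup_{x\in X}\limsup_n f_n(x)=0$, while every $g=\sum_n\lambda_n f_n$ satisfies $\sup_X g=\sum_n\lambda_n=1$, so $\inf_g\sup_X g=1>0$. Your proposal never verifies (or even mentions) this attainment condition and declares the failure of a uniform lower bound to be ``the'' technicality; in fact attainment is precisely the nontrivial content of this implication, and it is what the paper proves. Namely: if some $x\in\co_{\sigma_{p}}\{x_n\colon n\geq 1\}$ (note $x\in L_D$) did not attain its finite supremum on the weak$^*$-closed set $C$, then every maximizing sequence would have to be norm-unbounded (a bounded one would have a weak$^*$-cluster point in $C$ attaining the supremum); the Uniform Boundedness Principle then yields $z\in S_E$ with $(\langle z,x_k^{\ast}\rangle)_k$ unbounded above along the maximizing sequence, and since $L_D$ is open, $x+\varepsilon z\in L_D$ for small $\varepsilon>0$ would have infinite supremum on $C$, contradicting the standing hypothesis of the proposition. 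With attainment in hand, the paper invokes the \emph{unbounded} version of Simons' inequality from \cite[Theorem 10.5]{Compactness-optimality-and-risk}.

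Your truncation device $\max(\langle\cdot,x_n\rangle,-M)$ cannot repair this: the obstruction to attainment lives at the supremum, where truncation from below changes nothing, and $C$ need not be bounded, so no weak$^*$-compactness is available at the top. Moreover, convex series combinations of truncations are not truncations of combinations, so to apply the classical (uniformly bounded) Simons lemma to the truncated family you would still have to prove that each $\sum_n\lambda_n\max(\langle\cdot,x_n\rangle,-M)$ attains its supremum on $C$ --- an assertion at least as delicate as the one you skipped. The fix is to insert the UBP argument sketched above to establish attainment on $C$ for every element of $\co_{\sigma_{p}}\{x_n\colon n\geq 1\}$, and then quote an unbounded Simons inequality (the functions $\langle\cdot,x_n\rangle$ are bounded above on $C$ by hypothesis, which is the form such statements require), rather than the classical lemma plus truncation.
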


\begin{proof}
(i) $\Rightarrow$ (ii): Fix a bounded sequence $(x_{n})_{n \in \N}$ in $L_{D}$ and write
\[ \alpha =\sup_{b^{\ast} \in B}{(\limsup_{n}{\langle b^{\ast},x_{n} \rangle})}. \]
If $\alpha = + \infty$ then the equality is clear. Otherwise, fix $\varepsilon > 0$ and define the sequence of \ws-compact convex sets
\[ K_{n} := \{ x^{\ast} \in C \cap n B_{E^{\ast}}\colon \langle x^{\ast},x_{k} \rangle \leq \alpha + \varepsilon \mbox{ for every $k \geq n$} \}. \] 
Since $B \subset \bigcup_{n=1}^{\infty}{K_{n}}$, it follows from (i) that $ C \subset \overline{\bigcup_{n=1}^{\infty}{K_{n}} + \Lambda_{D}}^{\| \cdot \|}$. But for every $x^{\ast} \in \bigcup_{n=1}^{\infty}{K_{n}}, \lambda \geq 0, d^{\ast} \in D$ we have that
\[ \limsup_{n}{\langle x^{\ast}+ \lambda d^{\ast}, x_{n} \rangle} \leq \limsup_{n}{\langle x^{\ast}, x_{n} \rangle} \leq \alpha + \varepsilon, \]
 so \mbox{$\limsup_{n}{\langle c^{\ast}, x_{n} \rangle} \leq \alpha + \varepsilon$} for each $c^{\ast} \in C$.

(ii) $\Rightarrow$ (iii): It will be enough to show that
\begin{equation} 
\label{EQUA:inequalitySimons}
\sup_{c^{\ast} \in C}{(\limsup_{n}{\langle c^{\ast},x_{n} \rangle})} \geq \inf_{x \in \co_{\sigma_{p}}{\{ x_{n}\colon n \geq 1 \}}}{ \sup_{c^{\ast} \in C}{\langle c^{\ast}, x}\rangle }. 
\end{equation}
This will be a consequence of the unbounded version of Simons' inequality (see \cite[Theorem 10.5]{Compactness-optimality-and-risk}) once we have checked that every $x \in \co_{\sigma_{p}}{\{ x_{n}\colon n \geq 1 \}}$ attains its supremum on $C$. Fix such an $x$ and notice that $x \in L_{D}$. Assume that $x$ does not attain its (finite) supremum $\alpha$ on $C$, so there is a sequence of elements $(x_{n}^{\ast})_{n \in \N}$ in $C$ satisfying
\[ \alpha - 1 \leq \langle x_{0}, x^{\ast}_{n} \rangle \leq \alpha \mbox{ \hspace{2mm} and \hspace{2mm} } \| x_{n}^{\ast}\| \geq n \mbox{ for each $n \in \N$. } \] 
By the Uniform Boundedness Principle there exists $z \in S_{E}$ such that $(\langle z, x_{n}^{\ast} \rangle)_{n \in \N}$ is unbounded above. Since $L_{D}$ is an open set, we have that $x_{0} + \varepsilon z \in L_{D}$ for $\varepsilon > 0$ small enough. But this element has not bounded supremum on $C$, which contradicts the hypothesis of the theorem.

(iii) $\Rightarrow$ (i): We will proceed by contradiction and suppose that there are $z_{0}^{\ast} \in C$ and $\delta > 0$ such that $z_{0}^{\ast} + \delta B_{E^{\ast}}$ has empty intersection with $\bigcup_{n = 1}^{\infty}{K_{n}} + \Lambda_{D}$. For each $n \in \N$ the set $K_{n} + \Lambda_{D}$ is \ws-closed, so we can find $x_{n} \in S_{E}$ and $\alpha_{n} \in \R$ satisfying
\[ \langle x_{n}, z_{0}^{\ast} \rangle > \alpha_{n} + \delta > \alpha_{n} > \langle x_{n}, x^{\ast} + \lambda d^{\ast} \rangle \mbox{ \: for every \: $x^{\ast} \in K_{n}, \lambda \geq 0, d^{\ast} \in D$}. \]
In particular, this implies that $\langle x_{n}, d^{\ast} \rangle \leq 0$ for every $d^{\ast} \in D$. Fix $y_{0} \in L_{D} \cap S_{E}$ and write \mbox{$\gamma:= \sup_{x^{\ast} \in C}{\langle y_{0}, x^{\ast} \rangle} < +\infty$}. Define for each $n \in \N$ the element 
\[ y_{n}:= x_{n} + \frac{\delta}{3(1 + |\gamma| + |\langle y_{0},z_{0}^{\ast} \rangle|)}y_{0}. \]
It satisfies that $y_{n} \in L_{D}$. Moreover
\begin{equation}
\label{EQUA:boundz} 
\langle y_{n}, z_{0}^{\ast}\rangle = \langle x_{n}, z_{0}^{\ast}\rangle + \langle y_{0}, z_{0}^{\ast} \rangle \frac{\delta}{3(1 + |\gamma| + |\langle y_{0}, z_{0}^{\ast} \rangle|)} \geq  \alpha_{n} + \frac{2 \delta}{3} 
\end{equation}
and whenever $b^{\ast} \in B \cap K_{n}$ we have that
\begin{equation} 
\label{EQUA:boundB}
\langle y_{n}, b^{\ast} \rangle = \langle x_{n}, b^{\ast} \rangle + \frac{\delta}{3(1 + |\gamma| + |\langle y_{0}, z_{0}^{\ast} \rangle|)}\langle y_{0}, b^{\ast} \rangle < \alpha_{n} + \frac{\delta}{3}.
\end{equation}
Fix now a \ws-cluster point $y^{\ast \ast} \in E^{\ast \ast}$ of $(y_{n})_{n \in \N}$. By taking a subsequence we can assume that $\lim_{n}{\langle y_{n}, z_{0}^{\ast}\rangle} = \langle y^{\ast \ast}, z_{0}^{\ast} \rangle$ and moreover
\begin{equation}
\label{EQUA:aux2Equiv}
\langle y_{n}, z_{0}^{\ast} \rangle > \langle y^{\ast \ast}, z_{0}^{\ast} \rangle - \frac{\delta}{6} \mbox{ for each $n \in \N$}.
\end{equation} 
Given $b^{\ast} \in B$ there is $n_{0}\in \N$ such that $b^{\ast} \in K_{n}$ for each $n \geq n_{0}$. Therefore, we can combine \eqref{EQUA:boundz} and \eqref{EQUA:boundB}, and taking superior limits on $n$ one gets that
\begin{equation}
\label{EQUA:aux3Equiv}
\langle y^{\ast \ast}, z_{0}^{\ast} \rangle - \frac{ 2 \delta}{3} \geq \limsup_{n}{\langle y_{n}, b^{\ast} \rangle} - \frac{\delta}{3} \mbox{ for every $b^{\ast} \in B$}.
\end{equation}
Finally
\[
\begin{split}
 \langle y^{\ast \ast}, z_{0}^{\ast} \rangle - \frac{2 \delta}{3} & \stackrel{(\ref{EQUA:aux3Equiv})}{\geq} \sup_{b^{\ast} \in B}{\limsup_{n}{\langle y_{n}, b^{\ast} \rangle}} - \frac{\delta}{3} \stackrel{(iii)}{\geq} \inf_{y \in \co_{\sigma_{p}}{\{ y_{n}\colon n \geq 1 \}}}{\sup_{x^{\ast} \in C}{\langle y, x^{\ast} \rangle}} - \frac{\delta}{3}\\ 
& \geq  \inf_{y \in \co_{\sigma_{p}}{\{ y_{n}\colon n \geq 1 \}}}{\langle y, z_{0}^{\ast} \rangle} - \frac{\delta}{3} \stackrel{(\ref{EQUA:aux2Equiv})}{\geq} \langle y^{\ast \ast}, z_{0}^{\ast} \rangle - \frac{\delta}{2}
\end{split} 
\]
which leads to $0 \geq \delta/6$. This contradiction finishes the proof. 
\end{proof}

Remark that this last proposition together with the unbounded version of Simons' inequality \cite[Theorem 10.5]{Compactness-optimality-and-risk} gives an alternative proof of Theorem \ref{theo:UnboundedIEnvelopes}.


\section{One-side James' theorem}
\label{sec:3}

The aim of this section is to prove the main Theorem \ref{TEOR:MainOneSide}. The first auxiliary result that we will need is the following theorem which is a one-side version of the classical Rainwater-Simons' theorem \cite[p.139, Theorem 3.134]{fab-haj-mont-ziz}. We also refer to \cite[p. 100, Corollary 3]{moo} for a proof of Rainwater-Simons' theorem using the property of (I)-generation. Indeed, we will base on this last approach but with the one-side version of the (I)-generation that was developed in Theorem \ref{theo:UnboundedIEnvelopes}.

\begin{thm}
\label{TEOR:RainwaterGeneral}
Let $B$ be a bounded subset of $E^{\ast}$ with $0 \notin K := \overline{\co{(B)}}^{\omega^{\ast}}$ and satisfying:
\begin{enumerate}
\item[(i)] For every $x \in L_{K}$ there is $b^{\ast} \in B$ with $\langle x, b^{\ast}\rangle = \sup(x,B) = \sup(x,K)$.
\end{enumerate}
If $(x_{n})_{n \in \N}$ is a bounded sequence in $E$ which $\sigma(E,B)$-converges to $0$, then it is $\sigma(E,K)$-convergent to $0$.
\end{thm}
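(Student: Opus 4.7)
The plan is to argue by contradiction, adapting Moors' derivation of the classical Rainwater--Simons theorem from (I)-generation, but now with the one-side (I)-generation of Theorem~\ref{theo:UnboundedIEnvelopes}. Assume that $(x_{n})_{n\in\N}$ is bounded and $\sigma(E,B)$-null but fails to be $\sigma(E,K)$-null. Passing to a subsequence and, if necessary, replacing $x_{n}$ by $-x_{n}$, there exist $k_{0}^{\ast}\in K$ and $\epsilon>0$ with $\langle x_{n},k_{0}^{\ast}\rangle\geq 2\epsilon$ for every $n$. Set $N:=\sup_{n}\|x_{n}\|$, $M:=\sup\{\|k^{\ast}\|:k^{\ast}\in K\}$ and $d:=d(0,K)$; all three are finite and $d>0$, since $K$ is bounded, $\omega^{\ast}$-closed and $0\notin K$, which also forces $L_{K}\neq\emptyset$.

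Following the (I)-generation template, put
\[
K_{n}:=\{y^{\ast}\in K:\langle x_{m},y^{\ast}\rangle\leq\epsilon\ \text{for every}\ m\geq n\}.
\]
Each $K_{n}$ is an $\omega^{\ast}$-compact convex subset of $K$, and the $\sigma(E,B)$-nullity of $(x_{n})$ gives $B\subseteq\bigcup_{n}K_{n}$. Applying Theorem~\ref{theo:UnboundedIEnvelopes} with $D=K$ then yields
\[
K\subseteq\overline{\co\bigl(\mbox{$\bigcup_{n}K_{n}$}\bigr)+\Lambda_{K}}^{\|\cdot\|}.
\]
Given $\delta>0$ I would decompose $k_{0}^{\ast}=c^{\ast}+h^{\ast}+r^{\ast}$ with $c^{\ast}\in\co(\bigcup_{n}K_{n})$, $h^{\ast}\in\Lambda_{K}$ and $\|r^{\ast}\|<\delta$, and write $h^{\ast}=t\,k^{\ast\ast}$ with $t\geq 0$, $k^{\ast\ast}\in K$. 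The estimate $\|h^{\ast}\|\leq 2M+\delta$ together with $\|k^{\ast\ast}\|\geq d$ forces $t\leq (2M+\delta)/d$, so the scalar in the $\Lambda_{K}$-term is uniformly controlled.

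For every sufficiently large $n$ (depending only on the finitely many indices appearing in $c^{\ast}$) one has $\langle x_{n},c^{\ast}\rangle\leq\epsilon$, and expanding
\[
\langle x_{n},k_{0}^{\ast}\rangle=\langle x_{n},c^{\ast}\rangle+\langle x_{n},h^{\ast}\rangle+\langle x_{n},r^{\ast}\rangle
\]
yields $\langle x_{n},h^{\ast}\rangle\geq\epsilon-N\delta$. This is the precise point where Moors' classical argument terminates: there $\Lambda_{K}=\{0\}$, $h^{\ast}=0$, and one already reaches a contradiction. In the present one-side setting $\Lambda_{K}$ does \emph{not} vanish and one only obtains a new element $k^{\ast\ast}\in K$ with $\langle x_{n},k^{\ast\ast}\rangle\geq (\epsilon-N\delta)d/(2M+\delta)$ on a tail, i.e.\ a weaker copy of the original witness. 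Controlling this residual $\Lambda_{K}$-term is the main obstacle of the proof.

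To close the argument I would combine the decomposition with the Simons-type equality of Proposition~\ref{PROP:equivGenerationSimons}(ii), available because hypothesis (i) is equivalent to it. Fix $y_{0}\in L_{K}$ and $\alpha>0$ small enough that $z_{n}:=\alpha x_{n}+y_{0}$ stays in $L_{K}$; then $(z_{n})$ is bounded in $L_{K}$ and, since $\lim_{n}\langle b^{\ast},x_{n}\rangle=0$ for every $b^{\ast}\in B$, Proposition~\ref{PROP:equivGenerationSimons}(ii) gives
\[
\sup_{k^{\ast}\in K}\bigl(\alpha\,\limsup_{n}\langle x_{n},k^{\ast}\rangle+\langle y_{0},k^{\ast}\rangle\bigr)=\sup(y_{0},K).
\]
Evaluating at $k_{0}^{\ast}$ and letting $\alpha$ approach the boundary of $L_{K}$ yields the strict quantitative improvement $\langle y_{0},k_{0}^{\ast}\rangle\leq(1+\gamma)\sup(y_{0},K)$ uniformly in $y_{0}\in L_{K}$, with $\gamma>0$ depending only on $\epsilon$, $N$ and $M$. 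Feeding this extra slack into the one-side Bishop--Phelps decomposition of $k_{0}^{\ast}$ provided by Proposition~\ref{PROP:OneSideBishopPhelps} bounds the scalar $t$ in the $\Lambda_{K}$-part by the gap between $\langle y_{0},k_{0}^{\ast}\rangle$ and $\sup(y_{0},K)$, and the resulting refined version of the previous estimate on $\langle x_{n},h^{\ast}\rangle$ contradicts the assumption $\langle x_{n},k_{0}^{\ast}\rangle\geq 2\epsilon$, finishing the proof.
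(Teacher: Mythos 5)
Your setup is sound and in fact coincides with the paper's own scaffolding: the covering sets $K_{n}$, the application of Theorem~\ref{theo:UnboundedIEnvelopes} with $D=K$, and the resulting decomposition with a surviving cone term are exactly what the paper does, and you correctly diagnose that the whole difficulty is the residual $\Lambda_{K}$-part. But your proposed way of killing that term has a genuine gap, and the final step is not merely unproved but aimed in the wrong direction. The ``slack'' you extract from Proposition~\ref{PROP:equivGenerationSimons}(ii) is an inequality $\langle y_{0},k_{0}^{\ast}\rangle\leq(1+\gamma)\sup(y_{0},K)$ for all $y_{0}\in L_{K}$, i.e.\ a \emph{lower} bound, uniform in $y_{0}$, on the gap $\sup(y_{0},K)-\langle y_{0},k_{0}^{\ast}\rangle$ (recall $\sup(y_{0},K)<0$). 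In the proof of Proposition~\ref{PROP:OneSideBishopPhelps} the cone coefficient is bounded \emph{above} by precisely this gap divided by $|\sup(y_{0},K)|$; so a lower bound on the gap loosens, rather than tightens, any bound on the scalar $t$, and no ``refined estimate on $\langle x_{n},h^{\ast}\rangle$'' follows. Moreover, the slack inequality is by itself consistent --- for instance the centre of a ball $K=x_{0}^{\ast}+B_{E^{\ast}}$ with $\|x_{0}^{\ast}\|=2$ satisfies it for small $\gamma$ --- so no contradiction can be extracted from it without re-entering the covering structure, which your last paragraph does not do. Note also that the Bishop--Phelps decomposition produces an $x_{1}^{\ast}\in K$ that is merely cone-irreducible, not an element of $\co(\bigcup_{n}K_{n})$, so the two decompositions you want to ``feed'' into each other do not compose as stated.

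The paper closes exactly this gap with a device your one-shot decomposition misses: since $D=K$, the generator of the cone term lies in $K$ itself, so the decomposition can be \emph{iterated}. Starting from an arbitrary $x_{0}^{\ast}\in K$ (no contradiction argument is needed, and the sets $K_{n}^{\varepsilon}$ are taken with the two-sided condition $|\langle x_{k},y^{\ast}\rangle|\leq\varepsilon$) one builds $x_{n-1}^{\ast}\approx y_{n}^{\ast}+\lambda_{n}x_{n}^{\ast}$ with $y_{n}^{\ast}\in\bigcup_{m}K_{m}^{\varepsilon}$ and $x_{n}^{\ast}\in K$, with geometrically decaying errors, and telescopes to
\[
x_{0}^{\ast}\approx\sum_{k=1}^{n}\lambda_{0}\cdots\lambda_{k-1}\,y_{k}^{\ast}+\lambda_{0}\cdots\lambda_{n}\,x_{n}^{\ast}.
\]
The total mass $\sum_{k}\lambda_{0}\cdots\lambda_{k-1}$ is then controlled by evaluating at one fixed $z\in L_{K}$: each term contributes at most $\gamma=\sup(z,K)<0$, which forces $\sum_{k}\lambda_{0}\cdots\lambda_{k-1}\leq(\varepsilon-\langle z,x_{0}^{\ast}\rangle)/(-\gamma)$. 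Hence the cone remainder vanishes in the limit, $x_{0}^{\ast}$ lies within $\varepsilon$ of the convergent series $\sum_{k}\lambda_{0}\cdots\lambda_{k-1}y_{k}^{\ast}$, and one gets $\limsup_{n}|\langle x_{n},x_{0}^{\ast}\rangle|\leq\varepsilon\bigl(1+(\varepsilon-\langle z,x_{0}^{\ast}\rangle)/(-\gamma)\bigr)$, which tends to $0$ with $\varepsilon$. If you want to salvage your route, replace the Simons-slack paragraph by this iteration; the single application of Theorem~\ref{theo:UnboundedIEnvelopes} you already have is exactly the inductive step.
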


\begin{proof}
We can assume that the sequence $(x_{n})_{n \in \N}$ is contained in $B_{E}$. We will fix $z \in L_{K} \cap S_{E}$ and write $\gamma = \sup(z,K) < 0$.

Take $\varepsilon > 0$ and define for each $n \in \N$ 
\[ K_{n}^{\varepsilon} := \{ y^{\ast} \in K \cap n B_{E^{\ast}}\colon | \langle x_{k},y^{\ast} \rangle| \leq \varepsilon \mbox{ for all $k \geq n$}\}.\] 
This is an increasing sequence of \ws-compact convex subsets of $K$ whose union contains $B$ by the hypothesis. Theorem \ref{theo:UnboundedIEnvelopes} yields that
\begin{equation} 
\label{EQUA:auxRainwater}
K \subseteq \overline{\mbox{$\bigcup_{n=1}^{\infty}{K_{n}^{\varepsilon}}$} + \Lambda_{K}}^{\| \cdot \|}. 
\end{equation}
Fix an arbitrary $x_{0}^{\ast} \in K$ and put $\lambda_{0}=1$. Using (\ref{EQUA:auxRainwater}) we can construct inductively sequences $(x_{n}^{\ast})_{n \in \N}$ in $K$, $(y_{n}^{\ast})_{n \in \N}$ in $\bigcup_{n=1}^{\infty}{K_{n}^{\varepsilon}}$ and $(\lambda_{n})_{n \in \N}$ in $[0, + \infty)$ so that for every $n \geq 1$ 
\[ \left\| x_{n-1}^{\ast} - (y_{n}^{\ast} + \lambda_{n} x_{n}^{\ast}) \right\| < \frac{\varepsilon}{2^{n-1}(1 + \lambda_{0}) \cdot \ldots \cdot (1 + \lambda_{n-1})}. \]
A simple argument by induction shows that
\begin{equation} 
\label{EQUA:aux2Rainwater}
\left\| x_{0}^{\ast} - \left(\sum_{k=1}^{n}{\lambda_{0} \cdot \ldots \cdot \lambda_{k-1} y_{k}^{\ast}}\right) - \lambda_{0}\cdot \ldots \cdot \lambda_{n} x_{n}^{\ast} \right\| < \varepsilon \sum_{k=1}^{n}{\frac{1}{2^{k}}}. 
\end{equation}
On the other hand, equation \eqref{EQUA:aux2Rainwater} yields that
\[
\begin{split}
 \langle z, x_{0}^{\ast} \rangle & \leq \varepsilon + \left(\sum_{k=1}^{n}{\lambda_{0} \cdot \ldots \cdot \lambda_{k-1} \langle z, y_{k}^{\ast} \rangle}\right) + \lambda_{0} \cdot \ldots \cdot \lambda_{n} \langle z, x_{n}^{\ast} \rangle\\ 
 & \leq \varepsilon + \gamma \sum_{k=1}^{n+1}{\lambda_{0} \cdot \ldots \cdot \lambda_{k-1}}. 
\end{split} 
 \]
This implies that
 \begin{equation} 
 \label{EQUA:convergentSeries}
 \sum_{k=1}^{\infty}{\lambda_{0} \cdot \ldots \cdot \lambda_{k-1}} \leq \frac{\varepsilon - \langle z,x_{0}^{\ast} \rangle}{- \gamma}. 
 \end{equation}
Since $K$ is a bounded set and the previous series is absolutely convergent, it follows from \eqref{EQUA:aux2Rainwater} that
\[ \left\| x_{0}^{\ast} - \sum_{k=1}^{+\infty}{\lambda_{0} \cdot \ldots \cdot \lambda_{k-1} y_{k}^{\ast}} \right\| \leq \varepsilon. \]
Therefore, for each $n \in \N$ we have that
 \[
 |\langle x_{n}, x_{0}^{\ast} \rangle| \leq \varepsilon + \sum_{k=1}^{+ \infty}{\lambda_{0} \cdot \ldots \cdot \lambda_{k-1} |\langle x_{n},y_{k}^{\ast}\rangle|}.
 \]
Taking the superior limit on $n$ we deduce that
 \[ \limsup_{n}{| \langle x_{n},x_{0}^{\ast}\rangle |} \leq \varepsilon \left( 1 + \sum_{k=1}^{+\infty}{\lambda_{0} \cdot \ldots \cdot \lambda_{k-1}} \right) \leq \varepsilon \left( 1 + \frac{\varepsilon - \langle z, x_{0}^{\ast} \rangle}{- \gamma}\right).\]
Since $\varepsilon > 0$ and $x_{0}^{\ast} \in K$ were arbitrary, we conclude the result. 
\end{proof}

\begin{thm}
\label{TEOR:GodefroyTypeTheorem2}
Let $D$ be a convex \ws-compact subset of $E^{\ast}$ with $0 \notin D$ and let $B \subset E^{\ast}$ be a set such that
\begin{itemize}
\item[(i)] For each $x \in L_{D}$ there is $b^{\ast} \in B$ with $\langle x, b^{\ast} \rangle = \sup(x,B)$.
\item[(ii)] Every bounded sequence $(x_{n})_{n \in \N}$ in $E$ contains a convex block subsequence which is $\sigma(E, B \cup D)$-convergent.
\end{itemize}  
Then 
$$\overline{\co{(B)}}^{\omega^{\ast}} = \overline{\co{(B)}}^{\| \cdot \|}.$$
\end{thm}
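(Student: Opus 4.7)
The plan is to prove the nontrivial inclusion $\overline{\co(B)}^{\omega^{\ast}} \subseteq \overline{\co(B)}^{\|\cdot\|}$ by contradiction, combining a Hahn-Banach separation with the one-side (I)-generation from Theorem~\ref{theo:UnboundedIEnvelopes} and the convex block extraction in hypothesis~(ii). I first observe that $B$ is norm-bounded: since $D$ is convex, $\omega^{\ast}$-compact and $0 \notin D$, Hahn-Banach produces $x_{0} \in E$ with $\sup(x_{0}, D) < 0$, so $L_{D}$ is nonempty and, $D$ being bounded, also norm-open in $E$. If $B$ were unbounded, the Uniform Boundedness Principle would supply $v \in E$ with $\sup_{b^{\ast} \in B} |\langle v, b^{\ast}\rangle| = +\infty$; for $\varepsilon > 0$ small both $x_{0} \pm \varepsilon v$ lie in $L_{D}$ but one of them has infinite supremum on $B$, contradicting~(i). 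Hence $K := \overline{\co(B)}^{\omega^{\ast}}$ is $\omega^{\ast}$-compact convex and the inclusion $\overline{\co(B)}^{\|\cdot\|} \subseteq K$ is automatic.

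Assume now for contradiction that some $c^{\ast} \in K$ lies outside $\overline{\co(B)}^{\|\cdot\|}$. Hahn-Banach in $(E^{\ast}, \|\cdot\|)$ yields $\varphi \in E^{\ast\ast}$, $\alpha \in \R$ and $\delta > 0$ with $\langle \varphi, c^{\ast}\rangle > \alpha + 2\delta$ and $\langle \varphi, b^{\ast}\rangle \leq \alpha - 2\delta$ for every $b^{\ast} \in B$. Using Goldstine together with a fixed translation by an element of $L_{D}$, I construct a bounded sequence $(x_{n}) \subset L_{D}$ with $\lim_{n} \langle x_{n}, c^{\ast}\rangle = \gamma$ for some $\gamma > \alpha + 2\delta$ and, crucially, $\limsup_{n} \langle x_{n}, b^{\ast}\rangle \leq \langle \varphi, b^{\ast}\rangle$ for every $b^{\ast} \in B$ (modulo a harmless shift in the constants absorbed by the translation). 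Applying (ii) extracts a convex block subsequence $(y_{n}) \subset L_{D}$ (using convexity of $L_{D}$) which is $\sigma(E, B \cup D)$-convergent; let $\ell$ denote its pointwise limit on $B \cup D$. Since convex combinations preserve ordinary and upper limits, $\lim_{n}\langle y_{n}, c^{\ast}\rangle = \gamma$ while $\ell(b^{\ast}) \leq \langle \varphi, b^{\ast}\rangle \leq \alpha - 2\delta$ for each $b^{\ast} \in B$.

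The decisive step is the one-side (I)-generation. Set $\beta := \sup_{b^{\ast} \in B} \ell(b^{\ast}) \leq \alpha - 2\delta$ and fix $\varepsilon > 0$. The sets
\[ K_{m} := \bigl\{ k^{\ast} \in K : \langle y_{j}, k^{\ast}\rangle \leq \beta + \varepsilon \text{ for all } j \geq m \bigr\} \]
are $\omega^{\ast}$-compact convex subsets of $K$ whose union contains $B$. Theorem~\ref{theo:UnboundedIEnvelopes} then yields
\[ K \subseteq \overline{\co\Bigl(\bigcup_{m} K_{m}\Bigr) + \Lambda_{D}}^{\|\cdot\|}. \]
Approximating $c^{\ast}$ inside this norm-closed set within $\eta > 0$ and evaluating at $y_{n}$ for $n$ larger than the finitely many indices appearing (using $\langle y_{n}, d^{\ast}\rangle \leq 0$ for $d^{\ast} \in D$ since $y_{n} \in L_{D}$) gives $\gamma \leq \beta + \varepsilon + M\eta$ with $M := \sup_{n} \|y_{n}\|$. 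Letting $\eta \to 0$ and then $\varepsilon \to 0$ yields $\gamma \leq \beta \leq \alpha - 2\delta$, contradicting $\gamma > \alpha + 2\delta$.

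The main obstacle is the construction in the second paragraph: producing a bounded sequence $(x_{n}) \subset L_{D}$ with $\limsup_{n}\langle x_{n}, b^{\ast}\rangle \leq \langle \varphi, b^{\ast}\rangle$ for \emph{every} $b^{\ast} \in B$, since Goldstine alone yields only a net indexed by finite subsets of $E^{\ast}$. When $B$ is countable a diagonal extraction from the Goldstine net suffices via metrizability of $\omega^{\ast}$ on bounded sets. In general one must exploit hypothesis~(ii) itself during the construction, performing iterated convex block extractions along an increasing countable family of functionals in $B$ that witnesses the supremum $\beta$, and using the sequential stability built into (ii) to propagate the pointwise domination by $\langle \varphi, \cdot\rangle$ to all of $B$.
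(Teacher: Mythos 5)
There is a genuine gap, and it sits exactly where you flag it: the construction in your second paragraph. You need a single bounded sequence $(x_{n})$ in $L_{D}$ with $\lim_{n}\langle x_{n}, c^{\ast}\rangle = \gamma > \alpha$ and $\limsup_{n}\langle x_{n}, b^{\ast}\rangle \leq \langle \varphi, b^{\ast}\rangle$ for \emph{every} $b^{\ast} \in B$, and nothing in the hypotheses delivers this. Goldstine yields only a net; hypothesis (ii) lets you take convex block subsequences of a sequence you already have, converging pointwise on $B \cup D$ to some limit function $\ell$, but it gives no control of $\ell$ by $\varphi$ --- that pointwise domination on all of $B$ is precisely the double-limit interchange the theorem encodes, not something you may assume. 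Your proposed repair is moreover circular: the ``increasing countable family of functionals witnessing the supremum $\beta$'' presupposes $\ell$ (hence the extraction already performed), and controlling countably many points of $B$ cannot produce the covering $B \subseteq \bigcup_{m} K_{m}$, which requires the inequality $\langle y_{j}, b^{\ast}\rangle \leq \beta + \varepsilon$ eventually at \emph{every} $b^{\ast} \in B$. The paper circumvents this with a James--Godefroy interlaced double sequence: $x_{m}^{\ast} \in \co{(B)}$ and $x_{n} \in B_{E}$ satisfying $\langle x_{m}^{\ast}, x_{n}\rangle > \beta$ for $m > n$ and $\langle x_{m}^{\ast}, x_{n}\rangle < \alpha$ for $m \leq n$, where Goldstine and \ws-density are invoked only against \emph{finitely many} constraints at each inductive step (so nets suffice); it then applies (ii) to $(x_{n})$, takes a \ws-cluster point $x_{\infty}^{\ast}$ of $(x_{m}^{\ast})$, and uses Theorem \ref{theo:UnboundedIEnvelopes} a second time to justify $\lim_{k}\langle x_{\infty}^{\ast}, y_{k}\rangle = \langle x_{\infty}^{\ast}, y_{\infty}\rangle$, reaching $\beta \leq \alpha$. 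In effect, your proposal with the missing construction promoted to a hypothesis is the paper's Theorem \ref{TEOR:god}, whose assumption (ii) is exactly the sequential Goldstine property --- true in separable spaces without $\ell^{1}(\N)$, but not a consequence of convex block compactness.

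Your opening claim that $B$ must be norm bounded is also false, because the information $\sup{(x,B)} < +\infty$ for $x \in L_{D}$ is one-sided and the Uniform Boundedness Principle does not apply: if $\sup_{b^{\ast} \in B}|\langle v, b^{\ast}\rangle| = +\infty$ along $(b_{n}^{\ast})$, the terms $\langle x_{0}, b_{n}^{\ast}\rangle$ may tend to $-\infty$ fast enough to swallow $\varepsilon|\langle v, b_{n}^{\ast}\rangle|$, so neither $x_{0} + \varepsilon v$ nor $x_{0} - \varepsilon v$ need have infinite supremum on $B$. Concretely, in $E = E^{\ast} = \R^{2}$ take $D = \{(0,1)\}$ and $B = \{(0,n) \colon n \geq 0\}$: then $L_{D} = \{x_{2} < 0\}$, every $x \in L_{D}$ attains $\sup{(x,B)} = 0$ at the origin, (ii) is trivial, yet $B$ is unbounded. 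This matters for your third paragraph: $K$ need not be \ws-compact and your sets $K_{m}$ need not be \ws-compact either, which is why the paper intersects with $n B_{E^{\ast}}$ when defining its sets $K_{n}^{\varepsilon}$, and why it obtains boundedness only of the constructed sequence $(x_{m}^{\ast})$, via the Baire-category estimate \eqref{EQUA:BaireApplication} together with its condition (d). That defect is repairable (replace $K_{m}$ by $K_{m} \cap m B_{E^{\ast}}$), but the gap in the second paragraph is structural: without the interlaced double-sequence construction (or an equivalent Simons-type inequality, as in Proposition \ref{PROP:equivGenerationSimons}), the argument does not close.
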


\begin{proof}
The set $L_{D}$ is open, and by (i) we can write it as the union $L_{D} = \bigcup_{n=1}^{\infty}{H_{n}}$ where $H_{n}$ is the closed set given by
\[ H_{n}:=\{ x \in L_{D}\colon \sup{(x, \co{(B)})} \leq n \}. \]
Baire's category theorem implies that there are $M \in \N$, $a \in L_{D}$ and $\delta > 0$ such that $a + \delta B_{E}\subset H_{M}$, which yields that
\begin{equation} 
\label{EQUA:BaireApplication}
\langle b^{\ast}, a \rangle + \delta \| b^{\ast}\| \leq M \mbox{ for every $b^{\ast} \in \co{(B)}$.}
\end{equation}
We will assume now that there exists an element $b_{0}^{\ast} \in \overline{\co{(B)}}^{\omega^{\ast}} \setminus \overline{{\rm co}(B)}^{\|\cdot\|}$ and we will get a contradiction. The separation theorem \cite[p. 59, Theorem 2.12]{fab-haj-mont-ziz} provides $x^{**}_0\in B_{E^{**}}$  and real numbers $\alpha, \beta$ satisfying 
\begin{equation}\label{EQUA:auxUnboundedGodefroy2}
\langle x^{**}_0,b_{0}^*\rangle > \beta > \alpha > \langle x^{**}_0, x^* \rangle \mbox{ for every $x^*\in \overline{{\rm co}(B)}^{\|\cdot\|}$.}
\end{equation}
We claim that we can then construct inductively sequences $(x_{m}^{\ast})_{m \in \N}$ in ${\rm co}(B)$ and $(x_{n})_{n \in \N}$ in $B_{E}$ with the following properties:
\begin{enumerate}
\item[(a)] $\langle b_{0}^{\ast}, x_{n}\rangle > \beta$ \: for each $n \in \N$,
\item[(b)] $\langle x_{m}^{\ast}, x_{n} \rangle > \beta$ \: if $m>n$,
\item[(c)] $\langle x_{m}^{\ast}, x_{n} \rangle < \alpha$ \: if  $m \leq n$,
\item[(d)] $\langle x_{m}^{\ast} , a \rangle > \langle b_{0}^{\ast}, a \rangle - 1$.
\end{enumerate}
To see this, take $x_{1}^{\ast} \in {\rm co}(B)$ with \mbox{$\langle x_{1}^{\ast} , a \rangle > \langle b_{0}^{\ast}, a \rangle - 1$} by \ws-density; and apply Goldstine's theorem \cite[p. 125, Theorem 3.96]{fab-haj-mont-ziz} to get $x_{1} \in B_{E}$ such that
\[ \langle x_{1}, b_{0}^{\ast}\rangle > \beta > \alpha > \langle x_{1}, x_{1}^{\ast} \rangle. \]
Suppose that we have constructed $(x_{m}^{\ast})_{m \leq k}$ and $(x_{n})_{n \leq k}$ satisfying the properties above. By \ws-density, it follows from (a) that there is $x_{k+1}^{\ast} \in {\rm co}(B)$ with
\[ x_{k+1}^{\ast} \in \bigcap_{n=1}^{k}{\{ x^{\ast}\colon \langle x^{\ast}, x_{n}\rangle > \beta \}} \mbox{ \hspace{3mm} and \hspace{3mm} } \langle x_{k+1}^{\ast} , a \rangle > \langle b_{0}^{\ast}, a \rangle - 1. \]
Using Goldstine's Theorem with $x_{0}^{\ast \ast}$, we can find $x_{k+1} \in B_{E}$ with
\[ \langle b^{\ast}, x_{k+1} \rangle > \beta > \alpha > \langle x_{m}^{\ast}, x_{k+1} \rangle \mbox{ \: for every \: $1 \leq m \leq k+1$}. \]
This proves the claim.

If we combine condition (d) and inequality \eqref{EQUA:BaireApplication} we deduce that $(x_{m}^{\ast})_{m \in \N}$ is bounded. Fix $x_{\infty}^{\ast}$ a \ws-cluster point of the sequence $(x_{m}^{\ast})_{m \in \N}$. By (ii), there is a convex block subsequence $(y_{n})_{n \in \N}$ of $(x_{n})_{n \in \N}$ that converges to some $y_{\infty} \in E$ on $B \cup D$. This fact together with (c) implies that
$ \langle x_{m}^{\ast}, y_{\infty} \rangle \leq \alpha$ for every $m \in \N$, and hence
\begin{equation}
\label{EQUA:auxAlpha2}
\langle x_{\infty}^{\ast}, y_{\infty} \rangle \leq \alpha.
\end{equation}
On the other hand, it follows from (b) that
\begin{equation}
\label{EQUA:auxBeta2}
\langle x_{\infty}^{\ast}, x_{n} \rangle \geq \beta \mbox{\hspace{2mm} for every $n \in \N$}.
\end{equation} 
Put $C:=\overline{\co{(B)}}^{\omega^{\ast}}$. Fix $\varepsilon > 0$ and define for each $n \in \N$ the \ws-compact convex set
\[ K_{n}^{\varepsilon} := \{ x^{\ast} \in C \cap n B_{E^{\ast}}\colon |\langle x^{\ast}, y_{\infty} - y_{k} \rangle| \leq \varepsilon \mbox{ \: for every\: $k \geq n$} \}. \]
The union of all $K_{n}^{\varepsilon}$'s contains $B$, so Theorem \ref{theo:UnboundedIEnvelopes} yields that
\[ C \subset \overline{\co{(\mbox{$\bigcup_{n=1}^{\infty}{K_{n}^{\varepsilon}}$})} + \Lambda_{D}}^{\| \cdot \|} = \overline{\mbox{$\bigcup_{n=1}^{\infty}{K_{n}^{\varepsilon}}$} + \Lambda_{D}}^{\| \cdot \|}. \]
We can then find $j \in \N$, $b_{\varepsilon}^{\ast} \in K_{j}^{\varepsilon}$, $\lambda_{\varepsilon} \geq 0$ and $d^{\ast}_{\varepsilon} \in D$ with
\[ \| x_{\infty}^{\ast} - (b_{\varepsilon}^{\ast} + \lambda_{\varepsilon} d_{\varepsilon}^{\ast}) \| < \varepsilon/(1 + \| y_{\infty}\|), \]
so that for each $k \geq j$
\[ 
\begin{split}
| \langle x_{\infty}^{\ast}, y_{\infty} - y_{k} \rangle | & \leq \| y_{\infty} - y_{k}\| \cdot \| x_{\infty}^{\ast} - (b_{\varepsilon}^{\ast} + \lambda_{\varepsilon} d_{\varepsilon}^{\ast}) \| + |\langle b_{\varepsilon}^{\ast} + \lambda_{\varepsilon} d_{\varepsilon}^{\ast}, y_{\infty} - y_{k} \rangle|\\
 & \leq \varepsilon + |\langle b_{\varepsilon}^{\ast} + \lambda_{\varepsilon} d_{\varepsilon}^{\ast}, y_{\infty} - y_{k} \rangle|.
\end{split} 
\]
Therefore
\[ \limsup_{k}{| \langle x_{\infty}^{\ast}, y_{\infty} - y_{k} \rangle |} \leq \varepsilon. \]
Since $\varepsilon > 0$ is arbitrary, we have that
\[ \langle x_{\infty}^{\ast}, y_{\infty} \rangle = \lim_{k}{\langle x_{\infty}^{\ast}, y_{k}} \rangle \geq \beta, \]
where the last inequality is consequence of (\ref{EQUA:auxBeta2}) and the definition of $(y_{n})_{n \in \N}$. But this contradicts (\ref{EQUA:auxAlpha2}).
\end{proof}

We are now ready to prove the main result of this paper.

\begin{proof}[Proof of Theorem \ref{TEOR:MainOneSide}]

We divide the proof in two parts: firstly we will deal with the particular case $A=\{0\}$, and secondly we will deduce the general case as an easy consequence.

Assume that $A= \{ 0\}$ and write $D := \overline{B}^{\sigma(E^{\ast \ast}, E^{\ast})} \subset E^{\ast \ast}$. Regarding $B$ and $D$ as subsets of $E^{\ast \ast}$, they both satisfy conditions  (i) and (ii) of Theorem \ref{TEOR:GodefroyTypeTheorem2} as we now check. If  $x^{\ast} \in L_{D}(E^{\ast}, E^{\ast \ast})$ then $\sup(x^{\ast}, B) < 0$, so it attains its supremum on $B$ by hypothesis. This shows (i). On the other hand, the hypothesis of $(B_{E^{\ast}}, \omega^{\ast})$ being convex block compact implies that, in particular, $(B_{E^{\ast}}, \sigma(E^{\ast}, B))$ is convex block compact. We can now apply Theorem \ref{TEOR:RainwaterGeneral} to deduce that $(B_{E^{\ast}}, \sigma(E^{\ast},B \cup D))$ is convex block compact. Hence, using Theorem \ref{TEOR:GodefroyTypeTheorem2} we obtain that
\[ \overline{B}^{\sigma(E^{\ast \ast}, E^{\ast})} = B. \]

To prove the general case, consider $\hat{B}:= \overline{B- A}^{\| \cdot \|}$ and $\hat{A}:=0$. Notice that every functional $x_{0}^{\ast}\in E^{\ast}$ with $\sup{(x_{0}^{\ast}, \hat{B})} < 0$ satisfies that $\sup{(x_{0}^{\ast}, B)} < \inf{(x_{0}^{\ast}, A)}$, so it attains its supremum on $B$ at some $b_{0} \in B$ and its infimum on $A$ at some $a_{0} \in A$ by the hypothesis. This clearly implies that $x_{0}^{\ast}$ attains its supremum on $\hat{B}$ at $b_{0} - a_{0}$. Therefore, $\hat{A}$ and $\hat{B}$ are under the conditions of the first part of the proof and we can conclude that $\hat{B}$ is weakly compact, from where it follows that $A$ and $B$ are both weakly compact.
\end{proof}

We will show another application of Theorem \ref{TEOR:GodefroyTypeTheorem2}.

\begin{thm}\label{TEOR:basic}
Let $E$ be a Banach space such that $(B_{E^{\ast}}, \omega^{\ast})$ is convex block compact and let $D$ be a weakly relatively compact subset of $E$ with $L_{D}(E^{\ast}, E) \neq \emptyset$. If $B$ is a bounded subset of $E$ such that every element of $L_D (E^*, E)$ attains its supremum on $B$, then $B$ is weakly relatively compact.
\end{thm}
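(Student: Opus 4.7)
The plan is to deduce this from Theorem \ref{TEOR:GodefroyTypeTheorem2} via a duality switch: work with the dual pair $(E^{\ast}, E^{\ast \ast})$ and regard $B$ and $D$ as subsets of $E^{\ast \ast}$ through the canonical embedding $E \hookrightarrow E^{\ast \ast}$. The key observation is that on $E \subset E^{\ast\ast}$, the topology $\sigma(E^{\ast \ast}, E^{\ast})$ coincides with the weak topology of $E$, so proving that $\overline{\co(B)}^{\|\cdot\|}$ is $\sigma(E^{\ast \ast}, E^{\ast})$-compact will show that $B$ is weakly relatively compact in $E$.

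First I would replace $D$ by $\tilde{D} := \overline{\co(D)}^{\sigma(E,E^{\ast})}$, which is weakly compact by the Krein--\v{S}mulian theorem since $D$ is weakly relatively compact. Because the supremum of a linear functional on $D$ equals its supremum on $\tilde{D}$, we have $L_{\tilde{D}}(E^{\ast}, E) = L_{D}(E^{\ast}, E)$, and the hypothesis $L_{D}(E^{\ast}, E) \neq \emptyset$ forces $0 \notin \tilde{D}$ (any $x^{\ast} \in L_{D}$ strictly separates $0$ from $\tilde{D}$). Viewed inside $E^{\ast \ast}$, the set $\tilde{D}$ is a convex $\sigma(E^{\ast \ast}, E^{\ast})$-compact subset with $0 \notin \tilde{D}$, so it qualifies as the ``$D$'' of Theorem \ref{TEOR:GodefroyTypeTheorem2} applied to the dual pair $(E^{\ast}, E^{\ast \ast})$.

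Next I would check the two hypotheses of that theorem. For (i), we need every $x^{\ast} \in L_{\tilde{D}}(E^{\ast}, E^{\ast \ast})$ to attain its supremum on $B$; since $\tilde{D} \subset E$ the pairings agree, so $L_{\tilde{D}}(E^{\ast}, E^{\ast \ast}) = L_{D}(E^{\ast}, E)$ and the conclusion is exactly the assumed norm-attainment property of $B$. For (ii), we must verify that every bounded sequence in $E^{\ast}$ has a convex block subsequence which is $\sigma(E^{\ast}, B \cup \tilde{D})$-convergent; but $B \cup \tilde{D} \subset E$, so the assumption that $(B_{E^{\ast}}, \omega^{\ast})$ is convex block compact furnishes a convex block subsequence that is $\omega^{\ast}$-convergent, hence in particular pointwise convergent on $B \cup \tilde{D}$.

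Theorem \ref{TEOR:GodefroyTypeTheorem2} then yields
\[ \overline{\co(B)}^{\sigma(E^{\ast \ast}, E^{\ast})} = \overline{\co(B)}^{\|\cdot\|}. \]
Since $B$ is bounded, the left-hand side is $\sigma(E^{\ast \ast}, E^{\ast})$-compact by Banach--Alaoglu; hence the norm-closed convex hull of $B$ is a $\sigma(E^{\ast\ast}, E^{\ast})$-compact subset of $E$, which is the same as being weakly compact in $E$, so $B$ is weakly relatively compact. The only non-formal step is recognising that the duality switch is legitimate and that passing to $\tilde{D}$ preserves the relevant geometric hypothesis $L_{D}(E^{\ast}, E) \neq \emptyset$; once this is in place the argument is a direct translation of Theorem \ref{TEOR:GodefroyTypeTheorem2} into the bidual framework.
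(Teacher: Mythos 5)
Your proposal is correct and follows essentially the same route as the paper: pass to the closed convex hull of $D$ via Krein's theorem, regard $B$ and $D$ as subsets of $E^{\ast\ast}$ through the canonical embedding, check hypotheses (i) and (ii) of Theorem \ref{TEOR:GodefroyTypeTheorem2} exactly as you do, and conclude from $\overline{\co(B)}^{\omega^{\ast}} = \overline{\co(B)}^{\|\cdot\|}$ that $B$ is weakly relatively compact. Your explicit verification that $L_{D}(E^{\ast},E) \neq \emptyset$ forces $0 \notin \tilde{D}$ is a detail the paper leaves implicit, and it is indeed needed to invoke that theorem.
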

\begin{proof}
We can assume that $D$ is convex and closed, since $\overline{\co{(D)}}^{\| \cdot \|}$ is also weakly compact in $E$ by Krein's Theorem \cite[p. 138, Theorem 3.133]{fab-haj-mont-ziz} and $L_{\overline{\co{(D)}}}(E^{\ast}, E) = L_{D}(E^{\ast}, E)$. Using the natural embedding $E \subset E^{\ast \ast}$ we can regard $B$ and $D$ as subsets of $E^{\ast \ast}$, so $D$ is \ws-compact and $L_{D}(E^{\ast}, E^{\ast \ast}) = L_{D}(E^{\ast}, E)$. Therefore, the hypothesis of Theorem \ref{TEOR:GodefroyTypeTheorem2} are satisfied, and we conclude that $\overline{\co{(B)}}^{\omega^{\ast}} = \overline{\co{(B)}}^{\|\cdot\|}$, which finishes the proof.
\end{proof}

If $(\Omega,{\mathcal F},\PP)$ is a probability space then the Banach space of all integrable functions $E = \LL^1(\Omega,{\mathcal F},\PP)$ is weakly compactly generated (see \cite[p. 576]{fab-haj-mont-ziz}), and hence the unit ball of its dual $B_{E^{\ast}}$ is weak$^{\ast}$-sequentially compact, where $E^{\ast} = \LL^\infty(\Omega,{\mathcal F},\PP)$. On the other hand, recall that a bounded subset $A$ is uniformly integrable if and only if it is weakly relatively compact by Dunford's criterion (see \cite[p. 76, Theorem 15]{die}). With this in mind, Theorem \ref{TEOR:basic} gives a positive answer to Question \ref{que:Delbaen}. We also particularize Theorem \ref{TEOR:basic} to this case in the following corollary.

\begin{cor}
Let $H$ be a uniformly integrable subset of $\LL^1(\Omega,{\mathcal F},\PP)$ with $0 \notin \overline{\co(H)}$. Suppose that $A$ is a subset of $\LL^1(\Omega,{\mathcal F},\PP)$ such that every $Y\in \LL^\infty(\Omega,{\mathcal F}, \PP)$  with $\inf \{\E [X\cdot Y]\colon X\in H \} > 0$ satisfies that $\inf{\{ \E[X \cdot Z]\colon Z \in A \}}$ is attained. Then $A$ is uniformly integrable.
\end{cor}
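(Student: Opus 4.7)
The plan is to derive the corollary as a direct specialization of Theorem~\ref{TEOR:basic} to the dual pair $(E,E^*)=(\LL^1(\Omega,\mathcal{F},\PP),\LL^\infty(\Omega,\mathcal{F},\PP))$ under the pairing $\langle X,Y\rangle=\E[XY]$. First I would verify the ambient hypotheses: $E=\LL^1$ is weakly compactly generated \cite[p.~576]{fab-haj-mont-ziz}, so $(B_{E^*},\omega^*)$ is weak$^*$-sequentially compact and in particular convex block compact. Setting $D:=H$, the uniform integrability of $H$ gives weak relative compactness in $\LL^1$ by Dunford's criterion \cite[p.~76, Theorem~15]{die}, while the assumption $0\notin\overline{\co(H)}^{\|\cdot\|}$ lets Hahn--Banach produce $Y_0\in\LL^\infty$ with $\sup_{X\in H}\E[XY_0]<0$, so $L_H(E^*,E)\neq\emptyset$.

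Next I would translate the attainment hypothesis of the corollary into the form required by Theorem~\ref{TEOR:basic}. For $Y\in L_H(E^*,E)$ the element $-Y$ satisfies $\inf\{\E[X(-Y)]:X\in H\}>0$, so by hypothesis there exists $Z_0\in A$ at which $\inf\{\E[Z(-Y)]:Z\in A\}$ is attained; equivalently $\sup\{\E[ZY]:Z\in A\}=\E[Z_0Y]$, so every $Y\in L_H(E^*,E)$ attains its supremum on $B:=A$, which is exactly the attainment clause of Theorem~\ref{TEOR:basic}.

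A further technical step is to confirm that $A$ is bounded in $\LL^1$, since Theorem~\ref{TEOR:basic} assumes this. I would extract boundedness from the support function $\phi(Y):=\sup_{Z\in A}\E[ZY]$: the attainment clause above makes $\phi$ finite on the nonempty open cone $L_H(E^*,E)$, and since $\phi$ is convex and weak$^*$-lower semicontinuous, it is locally bounded above at some point $Y_1$ of that cone, say $\phi\leq M$ on a ball $B(Y_1,\eta)$; combined with the positive homogeneity of $\phi$ along the cone, the inequality $\E[WY_1]+\eta\E[WZ]\leq M$ for all $W\in A$ and $\|Z\|_\infty\leq 1$ should yield $\sup_{W\in A}\|W\|_1<\infty$. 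Once boundedness is secured, Theorem~\ref{TEOR:basic} gives that $A$ is weakly relatively compact in $\LL^1$, and a final application of Dunford's criterion upgrades this to uniform integrability of $A$. The main obstacle I anticipate is this boundedness extraction: the naive estimate $(\eta-\|Y_1\|_\infty)\|W\|_1\leq M$ only controls the norm when $\eta>\|Y_1\|_\infty$, so one must either rescale $Y_1$ along the cone (using positive homogeneity) or average over several directions in $L_H(E^*,E)$ to close the argument.
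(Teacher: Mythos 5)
Your first two paragraphs reproduce the paper's proof exactly: the paper also takes $D:=H$, observes that $Y$ satisfies $\inf \{\E [X\cdot Y]\colon X\in H \} > 0$ if and only if $-Y\in L_{H}(\LL^{\infty},\LL^{1})$, that $Y$ attains its infimum on $A$ if and only if $-Y$ attains its supremum on $A$, and then invokes Theorem \ref{TEOR:basic}; the ambient facts you verify (weakly compactly generated implies $(B_{\LL^{\infty}},\omega^{\ast})$ is \ws-sequentially compact, hence convex block compact; Dunford's criterion for $H$; separation of $0$ from $\overline{\co{(H)}}$ to get $L_{H}\neq\emptyset$) are precisely those recorded in the paragraph preceding the corollary.

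The genuine problem is your third paragraph: boundedness of $A$ cannot be extracted from the stated hypotheses, because it is simply not implied by them. Take $H=\{-\mathbf{1}\}$ (the constant function $-1$), so $0\notin\overline{\co{(H)}}$ and the relevant functionals are exactly the $Y\in\LL^{\infty}$ with $\E[Y]<0$; take $A=\{c\,\mathbf{1}\colon c\leq 0\}$. For each such $Y$ and each $c\leq 0$ one has $\E[cY]=c\,\E[Y]\geq 0$, so $\inf\{\E[Z\cdot Y]\colon Z\in A\}=0$ is attained at $Z_{0}=0\in A$; yet $A$ is unbounded in $\LL^{1}$ and not uniformly integrable. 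This shows your anticipated obstacle is fatal rather than technical: in this example the inequality $\E[WY_{1}]+\eta\|W\|_{1}\leq M$ holds with $\E[WY_{1}]\to-\infty$ compensating $\eta\|W\|_{1}\to+\infty$; moreover, since the support function $\phi$ and the cone $L_{H}$ are positively homogeneous, rescaling $Y_{1}$ along the cone changes nothing (the ratio $\eta/\|Y_{1}\|_{\infty}$ is scale-invariant), and no averaging over directions can succeed against an actual counterexample. The resolution is that boundedness of $A$ is an implicit hypothesis, not a conclusion: Delbaen's Question \ref{que:Delbaen} assumes $A$ bounded, and Theorem \ref{TEOR:basic} --- which the paper's one-line proof of this corollary invokes --- requires the set $B$ to be bounded; the corollary as printed silently inherits this assumption. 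Once ``$A$ bounded'' is added to the statement, your first two paragraphs already constitute a complete proof identical to the paper's, and the third paragraph should be deleted.
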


\begin{proof}
An element $Y\in \LL^\infty(\Omega,{\mathcal F}, \PP) = \LL^1(\Omega,{\mathcal F},\PP)^{\ast}$ satisfies
$$\inf \{\E [X\cdot Y]\colon X\in H \} > 0$$
if and only if $-Y$ belongs to  $L_{H}(\LL^{\infty}, \LL^{1})$; and it attains its infimum on $A$ if and only if $-Y$ attains its supremum on the same set. The result now follows from Theorem \ref{TEOR:basic}
\end{proof}

We finish the section with a characterization of reflexivity for some Banach spaces.

\begin{cor}
\label{CORO:reflexiveCharacterization}
Let $E$ be a Banach space with $(B_{E^{\ast}}, \omega^{\ast})$ convex block compact. If there are $\alpha \in \R$ and a convex weakly compact set $D \subset E$ such that
\[ \emptyset \neq P := \{ x^{\ast} \in S_{E^{\ast}}\colon \sup{(x^{\ast}, D)} < \alpha \} \subset NA(E) \] 
then $E$ is reflexive.
\end{cor}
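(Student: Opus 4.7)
The plan is to reduce the corollary to Theorem \ref{TEOR:MainOneSide} by choosing a pair $(A,B)$ so that the conclusion of the theorem immediately forces weak compactness of $B_E$.

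Using that $P$ is nonempty and contained in $NA(E)$, I would first fix $x_0^* \in P$ together with a norming element $x_0 \in S_E$ satisfying $\langle x_0, x_0^* \rangle = 1$. Setting $\alpha^+ := \max\{\alpha, 0\}$, I would then define $A := D - \alpha^+ x_0$ and $B := \beta B_E$, where $\beta$ is picked positive in the interval $[\max\{-\alpha, 0\},\, \alpha^+ - \sup(x_0^*, D))$. This interval is nonempty precisely because $\sup(x_0^*, D) < \alpha$ forces $\alpha^+ - \sup(x_0^*, D) > \alpha^+ - \alpha = \max\{-\alpha, 0\}$. The set $A$ is weakly compact as a translate of $D$, both $A$ and $B$ are closed bounded convex, and the identity $\sup(x_0^*, A) = \sup(x_0^*, D) - \alpha^+ < -\beta$ paired with $\inf(x_0^*, B) = -\beta$ yields $d(A,B) > 0$.

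The key step is to show every $x^* \in E^*$ with $\sup(x^*, B) < \inf(x^*, A)$ is norm-attaining. Substituting $y^* := -x^*$ and expanding, the hypothesis reads $\sup(y^*, D) < -\beta \|y^*\| + \alpha^+ \langle x_0, y^* \rangle$. Combining $\langle x_0, y^* \rangle \leq \|y^*\|$ with the bound $\alpha^+ - \beta \leq \alpha$ (which follows from the choice of $\beta$, verified case by case on the sign of $\alpha$), this upgrades to $\sup(y^*, D) < \alpha \|y^*\|$. Hence $y^*/\|y^*\| \in P \subset NA(E)$, so $x^* = -y^*$ attains its norm on $B_E$ and therefore its supremum on $B$; its infimum on $A$ is attained by weak compactness of $A$. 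Theorem \ref{TEOR:MainOneSide} then yields $A$ and $B$ both weakly compact, and since $\beta > 0$, weak compactness of $B = \beta B_E$ is the reflexivity of $E$.

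The main obstacle is tuning $\beta$ to simultaneously guarantee $d(A,B) > 0$ (via the upper bound $\beta < \alpha^+ - \sup(x_0^*, D)$) and to force the functional condition into $P$ (via the lower bound $\beta \geq |\alpha|$ needed when $\alpha \leq 0$). The asymmetric translation by $\alpha^+ x_0$ is what allows a unified argument covering both $\alpha > 0$ (where $0$ may lie in $D$, and the shift produces the required strict separation) and $\alpha \leq 0$ (where no shift is needed, but the larger $\beta$ compensates on the ball side).
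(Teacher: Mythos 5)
Your argument is correct, but it follows a genuinely different route from the paper's. The paper first rescales ($D\mapsto\mu D$, $\alpha\mapsto\mu\alpha$) so that $D+\alpha B_{E}\subset B_{E^{\ast\ast}}$ and then splits into two cases: for $\alpha\geq 0$ it observes that, up to positive scaling, $L_{D}(E^{\ast},E)\subset P\subset NA(E)$ and invokes Theorem~\ref{TEOR:basic}, while for $\alpha<0$ it replaces $D$ by the inflated set $D'=D+\alpha B_{E^{\ast\ast}}$, for which $L_{D'}\cap S_{E^{\ast}}=P$, and applies Theorem~\ref{TEOR:MainOneSide} (in effect with $A=\{0\}$ and $B=D'$). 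You instead make a single, case-free application of Theorem~\ref{TEOR:MainOneSide} to the pair $A=D-\alpha^{+}x_{0}$, $B=\beta B_{E}$, where the translation exploits the norm-attainment of one element $x_{0}^{\ast}\in P$; your window $\max\{-\alpha,0\}\leq\beta<\alpha^{+}-\sup(x_{0}^{\ast},D)$ correctly encodes both requirements at once: $d(A,B)>0$ via the norm-one functional $x_{0}^{\ast}$, and the normalization of any separating functional into $P$ via the chain $\sup(y^{\ast},D)<\alpha^{+}\langle x_{0},y^{\ast}\rangle-\beta\|y^{\ast}\|\leq(\alpha^{+}-\beta)\|y^{\ast}\|\leq\alpha\|y^{\ast}\|$, the case $y^{\ast}=0$ being excluded since $x^{\ast}=0$ never satisfies the strict separation. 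Your route buys three things: it avoids the case split and the rescaling; it consumes only the main theorem rather than also Theorem~\ref{TEOR:basic}; and, notably, it covers the configuration $\alpha>0$ with $0\in D$, where $L_{D}(E^{\ast},E)=\emptyset$ and the paper's appeal to Theorem~\ref{TEOR:basic} (whose hypotheses require $L_{D}(E^{\ast},E)\neq\emptyset$) does not literally apply --- your shift by $\alpha^{+}x_{0}$ manufactures the strict separation that $D$ itself may lack. In exchange, the paper's proof is shorter given the machinery already in place, and for $\alpha\geq 0$ it passes through the more flexible relative-weak-compactness statement of Theorem~\ref{TEOR:basic}. (Both proofs tacitly assume $D\neq\emptyset$; if $D=\emptyset$ then $P=S_{E^{\ast}}\subset NA(E)$ and the classical James theorem applies directly.)
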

\begin{proof}
We can assume that $D + \alpha B_{E} \subset B_{E^{\ast \ast}} = \overline{B_{E}}^{\omega^{\ast}}$, replacing otherwise $D$ and $\alpha$ by $\mu D$ and $\mu \alpha$ with $\mu > 0$ small enough. If $\alpha \geq 0$ then $L_{D} \subset P \subset NA(E)$, and Theorem \ref{TEOR:basic} gives that $B_{E}$ is weakly compact. If $\alpha < 0$, then $x^{\ast} \in P$ if and only if 
\[\sup{(x^{\ast}, D + \alpha B_{E})} = \sup{(x^{\ast}, D)} - \alpha < 0.\] 
Thus the set $D':=D + \alpha B_{E^{\ast \ast}} \subset B_{E^{\ast \ast}} = \overline{B_{E}}^{\omega^{\ast}}$ is bounded, convex, closed and every $x^{\ast} \in L_{D'} = P$ attains its supremum on $D'$ (since it attains its supremum on $D$ by weak compactness, but also on $B_{E}$ by the hypothesis), so Theorem \ref{TEOR:MainOneSide} yields that $B_{E}$ is weakly compact.  
\end{proof}


\section{Unbounded nonlinear weak$^\ast$-James' theorem} 
\label{sec:4}

We are going to present now an unbounded version of Godefroy's theorem \cite[p. 174, Theorem I.2]{god}. A norm separable James boundary $B$ of a convex, bounded and closed subset $C$ of a dual Banach space $E^*$ is a strong boundary;  i.e. $$C=\overline {{\rm co}(B)}^{\|\cdot\|}.$$ This result is due to Godefroy and Rod\'e and it is based upon James approach to weak compactness as an optimization problem,  see  \cite[Section 3.11.8.3]{fab-haj-mont-ziz}.

We are going to deal here with the unbounded case where only one-side James boundaries conditions are applicable in a natural way. We will follow ideas of G. Godefroy's approach, see\cite{god,god1} but combined with Theorem \ref{theo:UnboundedIEnvelopes} instead of Simon's inequality.

\begin{thm}[\textbf{Unbounded Godefroy's Theorem}]\label{TEOR:god}
Let $D \subset E^{\ast}$ be a nonempty convex \ws-compact set with $0 \notin D$ and $B \subset E^{\ast}$ a nonempty set satisfying
\begin{enumerate}
\item[(i)] For each $x \in L_{D}$ there is $b^{\ast} \in B$ with $\langle x, b^{\ast} \rangle = \sup(x,B)$.
\item[(ii)] For every convex bounded subset $L\subset E$ and every $x^{**}\in \overline{L}^{\omega^{\ast}}\subset E^{**}$ there is a  sequence $(y_n)_{n \in \N}$ in $L$ such that 
$\langle x^{**}, z^*\rangle =\lim_n \langle y_n, z^* \rangle$ for every $z^*\in B \cup D$.
\end{enumerate}
We have that
\[ \overline{{\rm co}(B)}^{\omega^{\ast}} \subset \overline{{\rm co}(B) + \Lambda_{D}}^{\|\cdot\|}. \]
Moreover, if $D$ is weakly compact then
$$\overline{{\rm co}(B)}^{\omega^{\ast}} \subset \overline{{\rm co}(B)}^{\|\cdot\|} + \Lambda_{D}.$$
\end{thm}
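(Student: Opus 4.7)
I would model the proof on Theorem \ref{TEOR:GodefroyTypeTheorem2}, replacing its convex block compactness hypothesis with the sequential approximation (ii). Suppose for contradiction that some $b_{0}^{\ast} \in C := \overline{\co(B)}^{\omega^{\ast}}$ fails to belong to $\overline{\co(B) + \Lambda_{D}}^{\|\cdot\|}$. By the Hahn-Banach separation theorem there exist $x_{0}^{\ast \ast} \in E^{\ast \ast}$ and reals $\alpha < \beta$ with
\[ \langle x_{0}^{\ast \ast}, b_{0}^{\ast}\rangle > \beta > \alpha > \langle x_{0}^{\ast \ast}, c^{\ast} + \lambda d^{\ast}\rangle \]
for every $c^{\ast} \in \co(B)$, $\lambda \geq 0$, $d^{\ast} \in D$; letting $\lambda \to +\infty$ gives $\langle x_{0}^{\ast \ast}, d^{\ast}\rangle \leq 0$ on $D$ and $\lambda = 0$ gives $\langle x_{0}^{\ast \ast}, c^{\ast}\rangle \leq \alpha$ on $\co(B)$. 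A Baire-category argument applied to $L_{D} = \bigcup_{n}\{x \in L_{D}: \sup(x, \co(B)) \leq n\}$ then produces $a \in L_{D}$, $\delta > 0$, $M' > 0$ with $\langle b^{\ast}, a\rangle + \delta \|b^{\ast}\| \leq M'$ for $b^{\ast} \in \co(B)$, which together with \ws-density of $\co(B)$ in $C$ and Goldstine's theorem applied to $x_{0}^{\ast \ast}$ allows the inductive construction of bounded sequences $(x_{m}^{\ast}) \subset \co(B)$ and $(x_{n}) \subset M B_{E}$ satisfying the four conditions (a)--(d) used in Theorem \ref{TEOR:GodefroyTypeTheorem2}: $\langle b_{0}^{\ast}, x_{n}\rangle > \beta$; $\langle x_{m}^{\ast}, x_{n}\rangle > \beta$ for $m > n$; $\langle x_{m}^{\ast}, x_{n}\rangle < \alpha$ for $m \leq n$; and $\langle x_{m}^{\ast}, a\rangle > \langle b_{0}^{\ast}, a\rangle - 1$.

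The essential new ingredient is to synthesize the role of the convex block subsequence through hypothesis (ii). Because $x_{0}^{\ast \ast}$ lies in the \ws-closure of each bounded convex set $\Omega_{N} := \co\{x_{k}: k \geq N\}$ (a consequence of how $(x_{n})$ was built via Goldstine), hypothesis (ii) furnishes for every $N \in \N$ a sequence $(y_{n}^{(N)}) \subset \Omega_{N}$ with $\lim_{n} \langle y_{n}^{(N)}, z^{\ast}\rangle = \langle x_{0}^{\ast \ast}, z^{\ast}\rangle$ for all $z^{\ast} \in B \cup D$. A diagonal extraction then yields a single sequence $(y_{n})$ with $y_{n} \in \Omega_{N_{n}}$ for some $N_{n} \uparrow \infty$, retaining $\lim_{n} \langle y_{n}, z^{\ast}\rangle = \langle x_{0}^{\ast \ast}, z^{\ast}\rangle$ on $B \cup D$. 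Combining this ``block-like'' structure with (b) and (c) gives $\langle x_{m}^{\ast}, y_{n}\rangle > \beta$ for $m$ exceeding the support of $y_{n}$, and $\langle x_{m}^{\ast}, y_{n}\rangle < \alpha$ whenever $N_{n} \geq m$.

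The endgame runs parallel to that of Theorem \ref{TEOR:GodefroyTypeTheorem2}. Apply Theorem \ref{theo:UnboundedIEnvelopes} to the increasing \ws-compact convex cover
\[ K_{n}^{\varepsilon} := \{c^{\ast} \in C \cap n B_{E^{\ast}} : |\langle y_{k} - y_{j}, c^{\ast}\rangle| \leq \varepsilon \text{ for all } k, j \geq n\} \]
of $B$ (it covers $B$ because $(\langle y_{n}, b^{\ast}\rangle)$ is Cauchy for each $b^{\ast} \in B$) to obtain $C \subset \overline{\co(\bigcup_{n} K_{n}^{\varepsilon}) + \Lambda_{D}}^{\|\cdot\|}$. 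Then take a \ws-cluster point $x_{\infty}^{\ast}$ of the bounded sequence $(x_{m}^{\ast})$: the block structure and property (b) yield $\langle x_{\infty}^{\ast}, y_{n}\rangle \geq \beta$ for every $n$, while decomposing $x_{\infty}^{\ast}$ via the norm-approximation provided by Theorem \ref{theo:UnboundedIEnvelopes} and using (c) together with $\lim_{n} \langle y_{n}, d^{\ast}\rangle = \langle x_{0}^{\ast \ast}, d^{\ast}\rangle \leq 0$ for $d^{\ast} \in D$ yields $\limsup_{n} \langle x_{\infty}^{\ast}, y_{n}\rangle \leq \alpha$, contradicting $\alpha < \beta$. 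For the stronger second assertion (when $D$ is weakly compact) an additional refinement is required to decouple $\co(B)$ and $\Lambda_{D}$ in the norm closure, exploiting the better topological structure of $D$.

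The main obstacle is the final double-limit step: promoting the pointwise bound $\langle x_{0}^{\ast \ast}, x_{m}^{\ast}\rangle \leq \alpha$ (valid for each fixed $m$) to $\limsup_{n} \langle x_{\infty}^{\ast}, y_{n}\rangle \leq \alpha$ at the \ws-cluster point. This exchange of limits is not transparent, since the pointwise limit $\ell(c^{\ast}) := \lim_{n} \langle y_{n}, c^{\ast}\rangle$ on $C$ (whose existence is itself a consequence of the Cauchy control coming from the cover $K_{n}^{\varepsilon}$) is not \ws-continuous in general. It is precisely Theorem \ref{theo:UnboundedIEnvelopes} that supplies the uniform control on $\bigcup_{n} K_{n}^{\varepsilon}$ needed to legitimize the swap, in direct analogy with the corresponding step at the end of the proof of Theorem \ref{TEOR:GodefroyTypeTheorem2}.
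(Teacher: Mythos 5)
Your plan has a genuine gap at its central step, where you try to synthesize convex block structure out of hypothesis (ii). First, the claim that $x_{0}^{\ast\ast}$ lies in $\overline{\Omega_{N}}^{\omega^{\ast}}$ for $\Omega_{N}=\co{\{x_{k}\colon k\geq N\}}$ is unjustified: in the inductive construction each $x_{k}$ is produced by Goldstine's theorem merely so as to satisfy finitely many strict inequalities (your conditions (a)--(c)), not so as to \ws-approximate $x_{0}^{\ast\ast}$ in $E^{\ast\ast}$. A \ws-cluster point of $(x_{n})_{n\geq N}$ does lie in $\overline{\Omega_{N}}^{\omega^{\ast}}$, but it need not equal $x_{0}^{\ast\ast}$, and it is only for $x_{0}^{\ast\ast}$ that you know $\langle x_{0}^{\ast\ast}, b^{\ast}\rangle<\alpha$ on $\co{(B)}$ and $\langle x_{0}^{\ast\ast}, d^{\ast}\rangle\leq 0$ on $D$, which is what (ii) is supposed to transfer to the sequence. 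Second, even granting that claim, the diagonal extraction fails: each $(y^{(N)}_{n})_{n}$ converges to $x_{0}^{\ast\ast}$ pointwise on $B\cup D$, but $B\cup D$ is in general uncountable, pointwise convergence on it is not metrizable, and choosing one term from each sequence gives no control of $\langle y^{(N)}_{k_{N}},z^{\ast}\rangle-\langle x_{0}^{\ast\ast},z^{\ast}\rangle$ for a \emph{fixed} $z^{\ast}$ as $N\to\infty$. Without a single sequence converging on all of $B\cup D$, your cover $K^{\varepsilon}_{n}$ need not contain $B$, the control $\lim_{n}\langle y_{n},d^{\ast}\rangle\leq 0$ on $D$ is unavailable, and the endgame (including the limit exchange you yourself flag as the main obstacle) collapses.

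The paper's proof shows that all of this machinery --- Baire category, the biorthogonal-type double sequence, the cluster point $x_{\infty}^{\ast}$, and the limsup--limit swap --- is unnecessary, precisely because (ii) is stronger than convex block compactness: it may be applied to an arbitrary bounded convex $L$ and to the \emph{specific} point $x_{0}^{\ast\ast}$. Having separated $x_{0}^{\ast}$ from $\overline{\co{(B)}+\Lambda_{D}}^{\|\cdot\|}$ by $x_{0}^{\ast\ast}\in B_{E^{\ast\ast}}$ with $\langle x_{0}^{\ast\ast},x^{\ast}\rangle<\alpha<\beta<\langle x_{0}^{\ast\ast},x_{0}^{\ast}\rangle$, one observes by Goldstine that $x_{0}^{\ast\ast}\in\overline{L}^{\omega^{\ast}}$ for the slice $L=\{y\in B_{E}\colon \langle y,x_{0}^{\ast}\rangle>\beta\}$, and a \emph{single} application of (ii) to this $L$ yields $(y_{n})_{n\in\N}$ in $L$ with $\lim_{n}\langle y_{n},z^{\ast}\rangle=\langle x_{0}^{\ast\ast},z^{\ast}\rangle$ for all $z^{\ast}\in B\cup D$. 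Since $\langle x_{0}^{\ast\ast},\cdot\rangle<\alpha$ on $B$, the sets $K_{n}:=\{x^{\ast}\in nB_{E^{\ast}}\cap\overline{\co{(B)}}^{\omega^{\ast}}\colon \langle y_{k},x^{\ast}\rangle\leq\alpha \text{ for every } k\geq n\}$ cover $B$; Theorem \ref{theo:UnboundedIEnvelopes} then gives $x_{1}^{\ast}\in\co{(\bigcup_{n}K_{n})}$, $\lambda_{1}\geq 0$, $d_{1}^{\ast}\in D$ with $\|x_{0}^{\ast}-(x_{1}^{\ast}+\lambda_{1}d_{1}^{\ast})\|<(\beta-\alpha)/2$, and since eventually $\langle y_{k},x_{1}^{\ast}\rangle\leq\alpha$ while $\lambda_{1}\langle y_{k},d_{1}^{\ast}\rangle\to\lambda_{1}\langle x_{0}^{\ast\ast},d_{1}^{\ast}\rangle\leq 0$, one gets $\langle y_{k},x_{0}^{\ast}\rangle<\beta$ for large $k$, contradicting $y_{k}\in L$. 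Finally, you leave the second assertion (the case $D$ weakly compact) as a gesture; the paper proves it concretely by showing $\overline{\co{(B)}+\Lambda_{D}}^{\|\cdot\|}\subset\overline{\co{(B)}}^{\|\cdot\|}+\Lambda_{D}$: if $x_{n}^{\ast}+\lambda_{n}d_{n}^{\ast}\to y^{\ast}$ in norm, evaluating at any $x_{0}\in L_{D}$ (where $\sup{(x_{0},D)}<0$ and $\sup{(x_{0},B)}<+\infty$) forces $(\lambda_{n})_{n\in\N}$ to be bounded, so after passing to a subsequence with $\lambda_{n}\to\lambda$ and a weakly convergent subnet of $(d_{n}^{\ast})_{n\in\N}$ one obtains $y^{\ast}-\lambda d^{\ast}\in\overline{\co{(B)}}^{\omega}=\overline{\co{(B)}}^{\|\cdot\|}$ by Mazur's theorem.
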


\begin{proof}
We will reason by contradiction, assuming that there exists an element \[ x_{0}^{\ast} \in \overline{\co{(B)}}^{\omega^{\ast}} \setminus \overline{{\rm co}(B) + \Lambda_{D}}^{\|\cdot\|}.\] 
The separation theorem provides $x^{**}_0\in B_{E^{**}}$  and $\alpha<\beta$ satisfying 
\begin{equation}\label{EQUA:auxUnboundedGodefroy}
\langle x^{**}_0,x^*\rangle < \alpha < \beta < \langle x^{**}_0, x^*_0 \rangle \text{\: for every \: $x^*\in \overline{{\rm co}(B) + \Lambda_{D}}^{\|\cdot\|}$.}
\end{equation}
In particular we have that $\langle x_{0}^{\ast \ast}, b^{\ast} + \lambda d^{\ast} \rangle < \alpha$ for every $\lambda \geq 0, b^{\ast} \in B, d^{\ast}\in D$, which implies that
\begin{equation}\label{EQUA:auxNegativeOnD} 
\langle x_{0}^{\ast \ast}, d^{\ast} \rangle \leq 0 \mbox{ \: for each \: $d^{\ast} \in D$}. 
\end{equation}
Let us define the bounded and convex set 
	\begin{equation*}
		L:=\{y\in B_E \colon \langle y, x^*_0 \rangle > \beta\}.
	\end{equation*}
By Goldstine's theorem we have that $x^{**}_0 \in \overline{L}^{\omega^{\ast}}$. Our assumption (ii) implies that there is a sequence $(y_n)_{n \in \N}$ in $L$ that converges to $x^{**}_0$ pointwise on $B \cup D$. For each $n \in \N$ we define the \ws-compact convex set 
\[ K_{n} := \{ x^{\ast} \in n B_{E^{\ast}} \cap \overline{\co{(B)}}^{\omega^{\ast}} \colon \langle y_{k}, x^{\ast} \rangle \leq \alpha \mbox{ for every $k \geq n$} \}. \]
It is clear that $B$ is contained in the union of all $K_{n}$'s because of the choice of the sequence $(y_{n})_{n \in \N}$ and inequality \eqref{EQUA:auxUnboundedGodefroy}. By Theorem \ref{theo:UnboundedIEnvelopes} we deduce that
\[ x_{0}^{\ast} \in \overline{\co{(B)}}^{\omega^{\ast}} \subseteq \overline{\co{(\mbox{$\bigcup_{n=1}^{\infty}{K_{n}}$})} + \Lambda_{D}}^{\| \cdot \|}. \]
We can then find elements $x_{1}^{\ast} \in \co{(\bigcup_{n=1}^{\infty}{K_{n}})}, \lambda_{1} \geq 0$ and $d_{1}^{\ast} \in D$ with
\begin{equation} 
\label{EQUA:PointApprox}
\| x_{0}^{\ast} - (x_{1}^{\ast} + \lambda_{1} d_{1}^{\ast}) \| < (\beta - \alpha)/2. 
\end{equation}
Using \eqref{EQUA:auxNegativeOnD} and that $(y_{n})_{n \in \N}$ converges pointwise on $D$ to $x_{0}^{\ast \ast}$, we deduce the existence of some $N \in \N$ such that for every $k > N$ 
\begin{equation} 
\label{EQUA:boundEvaluation}
\langle y_{k}, x_{1}^{\ast} + \lambda_{1} d_{1}^{\ast} \rangle \leq \alpha + \lambda_{1} \langle y_{k}, d_{1}^{\ast} \rangle < \alpha + (\beta - \alpha)/2. 
\end{equation}
Therefore, if $k >N$ then by \eqref{EQUA:PointApprox} and \eqref{EQUA:boundEvaluation} we have that
\[ \langle y_{k}, x_{0}^{\ast} \rangle \leq \| x_{0}^{\ast} - (x_{1}^{\ast} + \lambda_{1} d_{1}^{\ast}) \| + \langle y_{k}, x_{1}^{\ast} + \lambda_{1} d_{1}^{\ast} \rangle <  \beta. \]
But this contradicts the fact that the sequence $(y_{n})_{n \in \N}$ is contained in $L$.

To prove the last statement of the theorem, we just have to show that if $D$ is weakly compact then
\[  \overline{\co{(B)} + \Lambda_{D}}^{\|\cdot\|} \subset \overline{\co{(B)}}^{\|\cdot\|} + \Lambda_{D}. \]
Fix $y^{\ast}$ belonging to set on the left hand side of the previous expression. We can approximate it in norm by a sequence $x_{n}^{\ast} + \lambda_{n} d_{n}^{\ast}$ where $x_{n}^{\ast} \in \co{(B)}, \lambda_{n} \geq 0$ and $d_{n}^{\ast} \in D$ for each $n \in \N$. Notice that $(\lambda_{n})_{n \in \N}$ is bounded, since otherwise the expression
\[ \langle x_{n}^{\ast} + \lambda_{n} d_{n}^{\ast}, x_{0} \rangle \leq \sup{(x_{0}, B)} + \lambda_{n} \sup{(x_{0}, D)}, \]
would imply that $\langle x_{0},y^{\ast} \rangle  \leq - \infty $ which is absurd. Taking a subsequence, we can assume that $\lambda_{n}$ converges to some $\lambda \geq 0$. On the other hand, since $D$ is weakly compact by hypothesis we can find a subnet of $(d_{n}^{\ast})_{n \in \N}$ which weakly converges to some $d^{\ast} \in D$. The correspondent subnet of $(x_{n}^{\ast})_{n \in \N}$ must be weakly convergent to $y^{\ast} - \lambda d^{\ast}$,  and thus
\[ y^{\ast} - \lambda d^{\ast} \in \overline{\co{(B)}}^{\omega} = \overline{\co{(B)}}^{\| \cdot \|}. \]
\end{proof}

We describe now an application of the Unbounded Godefroy's theorem \ref{TEOR:god} to variational problems in dual Banach spaces. Our results here are  nonlinear analogues to those of \cite{cas-alt16,god} for weak$^{\ast}$-James compactness results. This variational setting began with \cite{ori-rui1,ori-rui2,sai} and it was used to deal with robust representation of risk measures in mathematical finance. A main contribution reads as follows:

\begin{thm}\label{dualsr}
Let $E$ be a separable Banach space without copies of $\ell^1(\N)$
and let $$f:E^* \longrightarrow \mathbb{R}\cup
\{+\infty\}$$ be a norm lower semicontinuous, convex and proper map such that
$$
\hbox{for all } x\in E, \quad x-f \hbox{ attains its
supremum on }  E^*.
$$
Then the map $f$ is \ws-lower semicontinuous and 
for every $\mu\in \mathbb{R}, $ the sublevel set 
$ f^{-1}((-\infty,\mu])$  is  \ws-compact.

\end{thm}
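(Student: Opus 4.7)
My plan is to apply the Unbounded Godefroy Theorem \ref{TEOR:god} to $B := \epi(f) \subset E^{\ast} \times \mathbb{R}$ inside the dual pair $(E \times \mathbb{R}, E^{\ast} \times \mathbb{R})$, with auxiliary set $D := \{(0,1)\}$. The set $B$ is convex and norm-closed because $f$ is convex and norm lower semicontinuous, while $D$ is convex, weakly compact and satisfies $0 \notin D$.

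For hypothesis (i), observe that $L_D = \{(x,s) \in E \times \mathbb{R} : s < 0\}$. Given such $(x,s)$, write $\tau := -s > 0$; since $s < 0$ and $(y^{\ast}, t) \in \epi(f)$ forces $t \geq f(y^{\ast})$, the supremum
\[ \sup_{(y^{\ast},t) \in \epi(f)} \bigl[\langle x, y^{\ast}\rangle + s t\bigr] = \tau \sup_{y^{\ast} \in E^{\ast}} \bigl[\langle x/\tau, y^{\ast}\rangle - f(y^{\ast})\bigr] \]
is finite and attained by the assumption on $x - f$ applied to $x/\tau \in E$; the maximizer $y_0^{\ast}$ then produces the point $(y_0^{\ast}, f(y_0^{\ast})) \in \epi(f)$ realizing the supremum. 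For hypothesis (ii), the space $E \times \mathbb{R}$ is separable and does not contain $\ell^1$ (since $E$ does not), so Odell--Rosenthal gives angelicity of $(E^{**} \times \mathbb{R}, \sigma(E^{**} \times \mathbb{R}, E^{\ast} \times \mathbb{R}))$ with respect to norm-bounded subsets of $E \times \mathbb{R}$. Hence any $(x^{**},r)$ in the $\omega^{\ast}$-closure of a convex bounded $L \subset E \times \mathbb{R}$ is the $\omega^{\ast}$-limit of a sequence in $L$, which converges pointwise on all of $E^{\ast} \times \mathbb{R}$ and in particular on $B \cup D$.

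Theorem \ref{TEOR:god} (in its weakly-compact-$D$ refinement) therefore yields $\overline{\epi(f)}^{\omega^{\ast}} \subseteq \epi(f) + \Lambda_D$. Since $\Lambda_D = \{(0, \lambda) : \lambda \geq 0\}$ and epigraphs are upward-closed in the second coordinate, $\epi(f) + \Lambda_D = \epi(f)$, whence $\epi(f)$ is $\omega^{\ast}$-closed and $f$ is $\omega^{\ast}$-lower semicontinuous. For the sublevel sets $f^{-1}((-\infty, \mu])$: the attainment hypothesis yields $\sup(x - f) < + \infty$ for every $x \in E$, so $\langle x, y^{\ast}\rangle \leq \mu + \sup(x - f)$ for every $y^{\ast}$ in the sublevel set; this makes the sublevel set $\sigma(E^{\ast}, E)$-bounded, hence norm-bounded by the uniform boundedness principle. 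Combined with the $\omega^{\ast}$-closedness just obtained, Banach--Alaoglu delivers $\omega^{\ast}$-compactness. The main subtlety is the correct orientation of $D$, so that $L_D$ is precisely the set of $(x,s)$ for which the supremum on $\epi(f)$ is finite and reduces to the hypothesis on $x-f$, and the observation that the extra cone $\Lambda_D$ in Godefroy's inclusion is absorbed by the epigraph itself and does not spoil $\omega^{\ast}$-closedness.
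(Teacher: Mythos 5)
Your proposal is correct and follows the paper's own proof essentially step for step: the same epigraph $B=\epi(f)$ in $E^{\ast}\times\R$ with the singleton $D=\{(0,1)\}$, the same rescaling identifying $L_{D}=\{(x,s)\colon s<0\}$ and reducing attainment on $\epi(f)$ to the hypothesis on $x-f$, the Odell--Rosenthal theorem for hypothesis (ii) of Theorem \ref{TEOR:god}, and the same attainment-plus-Banach--Steinhaus argument for the boundedness of sublevel sets followed by Banach--Alaoglu. The only cosmetic deviations --- you invoke the weakly-compact-$D$ refinement and absorb $\Lambda_{D}$ into the epigraph, where the paper uses the first inclusion together with $B+\xi(0,1)\subset B$ for $\xi\geq 0$, and you justify (ii) in full generality via angelicity of Rosenthal compacta rather than the paper's direct sequential-limit citation --- do not alter the argument.
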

\begin{proof}
Let us denote with $$B= {\rm epi}(f)=\{(x^*,\xi)\in E^*\times\R: f(x^*)\leq \xi \} $$ the epigraph of $f$ in $E^*\times \R$.
The assertion  that
given $x\in E$ the function $x-f $ attains its
supremum on $E^*$ is tantamount to given $x\in E$ and $\lambda <0$ the pair $(x,\lambda)$ attains its supremum on $B$. Indeed, for every $(x,\lambda)\in E\times \R$ with $\lambda<0$ there exists $x^*_0\in \dom(f)$ such that
$$\sup\{\langle (x,\lambda), (x^*,t)\rangle \colon (x^*,t)\in B\}= \langle x, x^*_0\rangle +\lambda f(x^*_0)$$
if, and only if, the optimization problem
$$\sup\{\langle (x,-1), (x^*,t)\rangle\colon (x^*,t)\in B\}$$
which may be rewritten as
$$\sup\{\langle x, x^*\rangle -f(x^*)\colon x^*\in E^*\}$$
is attained. 
We can take a singleton set $D=\{d^*=(0,1)\}\in E^*\times \R$ this time to see that $(x,\lambda)$ attains its supremum on $B$ whenever
$\langle (x, \lambda),d^* \rangle = \lambda <0 $. Since every element $x^{\ast \ast} \in B_{E^{\ast \ast}}$ is the limit of a sequence $(x_{n})_{n \in \N}$ in $B_{E}$ (see \cite[p. 258, Theorem 5.40]{fab-haj-mont-ziz}), the hypothesis
of Theorem \ref{TEOR:god} are satisfied for the norm closed convex epigraph $B$ of $f$ and the singleton $D$, thus $B$  is going to be \ws-closed. Indeed, in this case we have $B+ \xi d^* \subset B$ for all $\xi\geq 0$ and so the conclusion of Theorem \ref{TEOR:god} reads $\overline{B}^{\omega^{\ast}}\subset  B$ as we wanted to prove, and the proof of the \ws-lower semicontinuity of $f$ is over.
Finally the level sets $ f^{-1}((-\infty,\mu])$  are bounded and so \ws-compact. Indeed, for every $x\in E$ there is $x^*_0\in E^*$
such that $$\langle x,x^*_0\rangle - f(x_0^*) \geq \langle x, y^*\rangle - f(y^*)$$ for all $y^*\in E^*$.
Thus we see that $$\langle x, y^*\rangle \leq \langle x, x^*_0\rangle -f(x^*_0) + \mu$$
for all $y^*\in f^{-1}((-\infty,\mu]) $. The Banach-Steinhaus theorem affirms now the boundness of $ f^{-1}((-\infty,\mu])$  and the proof is over.
\end{proof}

In arbitrary Banach spaces the previous theorem is not true, since the \ws-version of James' theorem for a separable Banach space $E$ holds if and only if $E$ does not contain $l^1(\N)$, see Corollary II.13 in \cite{god1}.

\section{Final remarks and questions}

\begin{enumerate}
\item It is not possible to extend Theorem \ref{TEOR:basic} for more general sets $D \subset E$. To see this, suppose that $D$ satisfies the property that \emph{whenever $B$ is a bounded subset of $E$ such that every $x^{\ast} \in L_{D}(E^{\ast}, E)$ attains its supremum on $B$, then $B$ is weakly relatively compact}. In particular, the set $B = \overline{\co{(\{ 0\} \cup D)}}^{\| \cdot \|}$ is weakly compact, since every $x^{\ast} \in L_{D}(E^{\ast}, E)$ attains its supremum on $B$ at $0$. But this implies in particular that $D$ is weakly relatively compact.

\item We do not know if Theorems \ref{TEOR:MainOneSide} and \ref{TEOR:basic} are valid for arbitrary Banach spaces. We conjecture that the answer is affirmative in both cases.

\item Let $E$ be a Banach space. If $E$ does not contain an isomorphic copy of $\ell^{1}$ then $(B_{E^{\ast}}, \omega^{\ast})$ is convex block compact (see\cite[Prop. 3.11]{bou}). This result was strengthened by Pfitzner \cite[p. 601, Proposition 11]{Pfitzner_Boundary_Problem}, who proved that if $E$ does not contain asymptotically isometric copies of $\ell^{1}$, then $(B_{E^{\ast}}, \omega^{\ast})$ is convex block compact. 

On the other hand, it can be shown that if $(B_{E^{\ast}}, \omega^{\ast})$ is block compact then $E$ does not contain a copy of $\ell^{1}(\mathbb{R})$ (see \cite{morillon} for the details). Haydon, Levy and Odell \cite{hay-lev-ode} proved that under Martin Axiom plus the negation of the Continuum Hypothesis, if $E$ does not contain a copy of $\ell^{1}(\mathbb{R})$ then $(B_{E^{\ast}}, \omega^{\ast})$ is convex block compact. In particular, this implies that convex block compactness and block compactness on $(B_{E^{\ast}}, \omega^{\ast})$ coincide under this set-theoretical assumptions. It seems to be an open question \cite[p. 270, Question 2]{morillon} whether this two notions are equivalent in general.

\item  In contrast to Theorem \ref{TEOR:GodefroyTypeTheorem2}, we cannot replace $\overline{\co{(B)} + \Lambda_{D}}^{\| \cdot \|}$ by $\overline{\co{(B)}}^{\| \cdot \|}$ in Theorem \ref{TEOR:god} as the following example shows: Consider $E = JT$ the James tree space, a dual (nonreflexive) separable Banach space without copies of $\ell^{1}$. In particular, every $x^{\ast \ast} \in B_{JT^{\ast \ast}}$ is the \ws-limit of a sequence in $B_{JT}$ (see \cite[p. 258, Theorem 5.40]{fab-haj-mont-ziz}). Hence the hypothesis (i) of Theorem \ref{TEOR:god} are satisfied for every pair of sets $B,D \subset JT^{\ast}$. Denote by $JT_{\ast}$ the predual of $JT$. Fix a norm-one element $x_{0}^{\ast} \in JT^{\ast}$ and define the sets
\[ B := \co{\left( (2 x_{0}^{\ast}+B_{JT^{\ast}}) \cup (4x_{0}^{\ast} + B_{JT_{\ast}})  \right)} \subset JT^{\ast} \]
and $D := 2 x_{0}^{\ast} + B_{JT^{\ast}}$. It is clear that every $x \in L_{D}$ attains its supremum on $B$. Therefore
\[ \overline{\co{(B)}}^{\| \cdot \|} \subsetneq \overline{\co{(B)}}^{\omega^{\ast}} \subset \overline{\co{(B)} + \Lambda_{D}}^{\| \cdot \|}. \]
\end{enumerate}



\end{document}